\newcommand{\keywordsname}{Key words}
\newcommand{\keywords}[1]{%
\begin{@bstr@ctlist}
\hspace*{\abstitleskip}{\abstractnamefont\keywordsname\@bslabeldelim}\abstracttextfont\
#1%
\par\end{@bstr@ctlist}
}
\newcommand{\subjclassname}{Mathematics subject classification}
\newcommand{\subjclass}[2][2010]{%
\begin{@bstr@ctlist}
\hspace*{\abstitleskip}{\abstractnamefont\subjclassname\ (#1)\@bslabeldelim}\abstracttextfont\
#2%
\par\end{@bstr@ctlist}
}
\def\and{%				%begin{tabular}
	\end{tabular}%
	%\hskip 1em \@plus.17fil%
	and%
	\begin{tabular}[t]{c}}%
\def\thanks#1{%\footnotemark
\protected@xdef\@thanks{\@thanks
\protect\footnotetext[\the\c@footnote]{#1}}%
}
\let\addresses\@empty      %\let\thankses\@empty
\newcommand{\address}[2][]{\g@addto@macro\addresses{\address{#1}{#2}}}
\newcommand{\curraddr}[2][]{\g@addto@macro\addresses{\curraddr{#1}{#2}}}
\newcommand{\email}[2][]{\g@addto@macro\addresses{\email{#1}{#2}}}
\newcommand{\urladdr}[2][]{\g@addto@macro\addresses{\urladdr{#1}{#2}}}
\def\enddoc@text{%\ifx\@empty\@translators \else\@settranslators\fi
  \ifx\@empty\addresses \else\@setaddresses\fi}
\def\emailaddrname{E-mail address}
\def\@setaddresses{\par
  \nobreak \begingroup
%\footnotesize
%
%---[...]
%
%  \def\\{\unskip, \ignorespaces}%
  \interlinepenalty\@M
  \def\address##1##2{\begingroup%
    \par\addvspace\bigskipamount%\indent
    \@ifnotempty{##1}{(\ignorespaces##1\unskip) }%
    {\noindent\ignorespaces##2}\par\endgroup}%
%
%---[...]
%
  \def\email##1##2{\begingroup
    \@ifnotempty{##2}{\nobreak\noindent\emailaddrname
      \@ifnotempty{##1}{, \ignorespaces##1\unskip}\/:\space
      \ttfamily##2\par}\endgroup}%
%
%---[...]
%
  \addresses
  \endgroup
}
\def\cstar#1{\expandafter\@cstar\csname c@#1\endcsname}
\def\@cstar#1{\ifcase#1\or $\ast$\or $\ast\ast$\or $\ast\ast\ast$\fi}
\AddEnumerateCounter{\cstar}{\@cstar}{$\ast\ast\ast$}
\newlist{conditions}{enumerate}{1}
\newlist{iconditions}{enumerate}{1}
\newlist{questions}{enumerate}{1}
\setlist[conditions]{label=\normalfont(\alph*),ref=\normalfont\alph*}
\setlist[iconditions]{label=\normalfont(\roman*),ref=\normalfont\roman*}
\setlist[questions]{label=\normalfont(Q\arabic*),ref=\normalfont Q\arabic*}
\newcommand{\rank}{\func{rank}}
\newcommand{\Z}{\mathbb{Z}}
\newcommand{\R}{\mathbb{R}}
\newcommand{\C}{\mathcal{C}}
\newcommand{\NF}{\mathfrak{N}}
\newcommand{\RC}{\mathcal{R}}
\newcommand{\SB}{\mathbb{S}}
\newcommand{\codim}{\func{codim}}
\newcommand{\one}{\mathbbm{1}}
\newcommand{\Reg}{\func{Reg}}
\newcommand{\Halg}{H^{\mathrm{alg}}}
\newtheorem{theorem}{Theorem}[section]
\newtheorem{corollary}[theorem]{Corollary}
\newtheorem{proposition}[theorem]{Proposition}
\newtheorem{lemma}[theorem]{Lemma}
\theoremstyle{definition}
\newtheorem{example}[theorem]{Example}
\theoremstyle{remark}
\newcounter{assertionLetter}
\newtheorem{assertion}{\indent Assertion}[assertionLetter]
\DeclarePairedDelimiter\abs{\lvert}{\rvert}%
\DeclarePairedDelimiter\norm{\lVert}{\rVert}%
\let\oldabs\abs
\def\abs{\@ifstar{\oldabs}{\oldabs*}}
\let\oldnorm\norm
\def\norm{\@ifstar{\oldnorm}{\oldnorm*}}
\mathchardef\mhyphen="2D
\addcolon\linebreak[2]%
\addspace\texttt{\mkbibbrackets{\thefield{eprintclass}}}}}}
\addspace\texttt{\mkbibbrackets{\thefield{eprintclass}}}}}}
\title{\bf Continuous rational maps into spheres}
\date{}
\author{Wojciech Kucharz\thanks{The author was partially supported by
NCN grant 2011/01/B/ST1/01289.}}
\address{Institute of Mathematics\\Faculty of Mathematics and Computer
Science\\Jagiellonian University\\\L{}ojasiewicza 6\\30-348
Krak\'ow\\Poland}
\email{Wojciech.Kucharz@im.uj.edu.pl}
\begin{document}
\maketitle
\thispagestyle{empty}

\begin{abstract}
Let $X$ be a compact nonsingular real algebraic variety. We prove that
if a continuous map from $X$ into the unit $p$-sphere is homotopic to a
continuous rational map, then, under certain assumptions, it can be
approximated in the compact-open topology by continuous rational maps.
As a byproduct, we also obtain some results on approximation of smooth
submanifolds by nonsingular subvarieties.
\end{abstract}

\keywords{Real algebraic variety, regular map, continuous rational map,
approximation, homotopy.}

\subjclass{14P05, 14P25, 57R99.}

\section{Introduction and main results}\label{sec-1}

Throughout this paper the term \emph{real algebraic variety} designates
a locally ringed space isomorphic to an algebraic subset of $\R^n$, for
some $n$, endowed with the Zariski topology and the sheaf of real-valued
regular functions (such an object is called an affine real algebraic
variety in \cite{bib2}). Nonsingular varieties are assumed to be of
pure dimension. The class of real algebraic varieties is identical with
the class of quasi-projective real varieties, cf.
\cite[Proposition~3.2.10, Theorem~3.4.4]{bib2}. Morphisms of real
algebraic varieties are called \emph{regular maps}. Each real algebraic
variety carries also the Euclidean topology, which is induced by the
usual metric on $\R$. Unless explicitly stated otherwise, all
topological notions relating to real algebraic varieties refer to the
Euclidean topology.

Let $X$ and $Y$ be real algebraic varieties. A map $f \colon X \to Y$ is
said to be \emph{continuous rational} if it is continuous and there
exists a Zariski open and dense subvariety $U$ of $X$ such that the
restriction $f|_U \colon U \to Y$ is a regular map. Let $X(f)$ denote
the union of all such $U$. The complement $P(f) = X \setminus X(f)$ of
$X(f)$ is called the \emph{irregularity locus} of $f$. Thus $P(f)$ is
the smallest Zariski closed subvariety of $X$ for which the restriction
$f|_{X \setminus P(f)} \colon X \setminus P(f) \to Y$ is a regular map.
If $f(P(f)) \neq Y$, we say that $f$ is a \emph{nice} map. There exist
continuous rational maps that are not nice, cf.
\cite[Example~2.2~(ii)]{bib16}. Continuous rational maps have only
recently become the object of serious investigation, cf. \cite{bib9,
bib15, bib16, bib17, bib18}.

The space $\C(X,Y)$ of all continuous maps from $X$ to $Y$ will always
be endowed with the compact-open topology. There are the following
inclusions
\begin{equation*}
\C(X,Y) \supseteq \RC^0(X,Y) \supseteq \RC_0(X,Y) \supseteq \RC(X,Y),
\end{equation*}
where $\RC^0(X,Y)$ is the set of all continuous rational maps,
$\RC_0(X,Y)$ consists of the nice maps in $\RC^0(X,Y)$, and $\RC(X,Y)$
is the set of regular maps. By definition, a continuous map from $X$ into
$Y$ can be approximated by continuous rational maps if it belongs to the
closure of $\RC^0(X,Y)$ in $\C(X,Y)$. Approximation by nice continuous
rational maps or regular maps is defined in the analogous way.

Henceforth we assume that the variety $X$ is compact and nonsingular,
and concentrate our attention on maps with values in the unit $p$-sphere
\begin{equation*}
\SB^p = \{ (u_0, \ldots, u_p) \in \R^{p+1} \mid u_0^2 + \cdots + u_p^2 =1
\}.
\end{equation*}
Any continuous map $h \colon X \to \SB^p$ has a neighborhood in $\C(X,
\SB^p)$ consisting entirely of maps homotopic to $h$. The following two
natural questions are of interest:

\begin{questions}%
\item\label{Q1} If $h$ is homotopic to a regular map, can it be approximated by
regular maps?

\item\label{Q2} If $h$ is homotopic to a continuous rational map, can it be
approximated by continuous rational maps?
\end{questions}

If $\dim X < p$, then the answer to either of these questions is ``yes''
since $\R^p$ is biregularly isomorphic to $\SB^p$ with one point
removed.

Assume then that $\dim X \geq p$. The answer to (\ref{Q1}) is affirmative
for $p \in \{ 1,2,4 \}$, cf. \cite{bib3} or \cite{bib2}. The answer to
(\ref{Q2}) is affirmative for $\dim X \leq p+1$ or $p \in \{1,2,4 \}$,
cf. \cite{bib17}. Nothing is known about (\ref{Q1}) and (\ref{Q2}) in
other cases. Actually, the sets $\RC(X, \SB^1)$ and $\RC^0(X, \SB^1)$
have equal closures in $\C(X, \SB^1)$, cf. \cite{bib16}. Furthermore,
the set $\RC_0(X, \SB^p)$ is dense in $\C(X, \SB^p)$ if $\dim X =p$,
cf. \cite{bib17}. On the other hand,
the closure of $\RC(\SB^1 \times \SB^1, \SB^2)$ in $\C(\SB^1 \times
\SB^1, \SB^2)$ coincides with the set of all continuous null homotopic
maps, and hence is different from $\C(\SB^1 \times \SB^1, \SB^2)$, cf.
\cites[Theorem~2.4]{bib4}[Proposition~2.2]{bib6}. If $\dim X = p + 1$,
then the closures of $\RC_0(X, \SB^p)$ and $\RC^0(X, \SB^p)$ in $\C(X,
\SB^p)$ are identical, cf. \cite{bib17}. No continuous rational
map is known that is not homotopic to a nice continuous rational map.
Similarly, no continuous rational map is known that cannot be
approximated by nice continuous rational maps. In
the present paper we obtain new results, related to (\ref{Q1}) and
(\ref{Q2}), on nice continuous rational maps. All results announced in
this section are proved in Section~\ref{sec-2}.

\begin{theorem}\label{th-1-1}
Let $X$ be a compact nonsingular real algebraic variety and let $p$ be
an integer satisfying $\dim X + 3 \leq 2p$. For a continuous map $h
\colon X \to \SB^p$, the following conditions are equivalent:
\begin{conditions}
\item\label{th-1-1-a} $h$ can be approximated by nice continuous
rational maps.

\item\label{th-1-1-b} $h$ is homotopic to a nice continuous rational map.
\end{conditions}
\end{theorem}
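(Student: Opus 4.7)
The implication (\ref{th-1-1-a}) $\Rightarrow$ (\ref{th-1-1-b}) is immediate from the observation stated in the introduction that every map sufficiently $C^0$-close to $h$ is homotopic to $h$; any approximation by elements of $\RC_0(X,\SB^p)$ therefore produces one in the homotopy class of $h$.

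The substantive direction is (\ref{th-1-1-b}) $\Rightarrow$ (\ref{th-1-1-a}). Fix a nice continuous rational map $g \colon X \to \SB^p$ homotopic to $h$ and a compact-open neighborhood $U$ of $h$; the plan is to produce an element of $\RC_0(X,\SB^p) \cap U$ by passing through the Pontryagin--Thom correspondence. First I would smooth $h$ to obtain $h_1 \in U$ with $h_1 \simeq h \simeq g$, choose a regular value $y_0 \in \SB^p$ of $h_1$, and set $M := h_1^{-1}(y_0)$. Then $M$ is a compact smooth codimension-$p$ submanifold of $X$ whose normal bundle comes with the framing induced by $dh_1$ and a trivialization of $T_{y_0}\SB^p$; the pair $(M,\text{framing})$ is a Pontryagin--Thom representative of the homotopy class of $h$. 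The crucial numerical input appears here: the assumption $\dim X + 3 \leq 2p$ is equivalent to $\dim M = \dim X - p \leq p-3$, which places $M$ in the stable range for algebraic realization of smooth framed submanifolds.

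Second, I would invoke the approximation theorem for smooth framed submanifolds by nonsingular algebraic subvarieties --- this is exactly the byproduct on submanifolds announced in the abstract, applicable in the range $\dim M \leq p-3$ --- to replace $(M,\text{framing})$ by a nonsingular algebraic subvariety $V \subset X$ of codimension $p$, close to $M$ in the $C^1$ sense, equipped with an algebraic framing $(\sigma_1,\ldots,\sigma_p)$ close to the original framing. From this algebraic data I would run the algebraic Pontryagin--Thom construction: an algebraic tubular neighborhood of $V$ together with the rational sections $\sigma_i$ gives rational functions $X \dashrightarrow \R^p$, and $\SB^p = \R^p \cup \{\infty\}$ produces a map $\tilde h \colon X \to \SB^p$ that is continuous, regular off an algebraic subvariety, and satisfies $\tilde h(P(\tilde h)) \subset \{\infty\} \neq \SB^p$, hence lies in $\RC_0(X,\SB^p)$. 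By construction, $\tilde h^{-1}(y_0) = V$ with framing $(\sigma_1,\ldots,\sigma_p)$, so $\tilde h$ is homotopic to $h_1$; and because $V$ (with its framing) $C^1$-approximates $M$ inside $X$, the Pontryagin--Thom formula makes $\tilde h$ compact-open close to $h_1$, placing it in $U$.

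The main obstacle is the quantitative algebraic approximation of the framed submanifold $(M,\text{framing})$: one needs not merely that the framed cobordism class of $M$ is represented by some nonsingular algebraic subvariety (which would only yield a nice continuous rational map in the correct homotopy class), but that this representative can be chosen $C^1$-close to $M$ in $X$ together with its framing. It is precisely the dimensional constraint $\dim M \leq p - 3$ that supplies the transversality and general-position arguments needed to perform this approximation, and a secondary, more routine but still delicate point is verifying that the resulting algebraic Pontryagin--Thom map genuinely belongs to $\RC_0(X,\SB^p)$ rather than to the larger class $\RC^0(X,\SB^p)$, for which the preimage of $\infty$ under the tubular-neighborhood compactification is the determining feature.
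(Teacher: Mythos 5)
Your broad outline matches the paper's — smooth $h$, pass to a regular-value preimage $M = h^{-1}(y)$, observe that $\dim X + 3 \le 2p$ is precisely the stable codimension bound, and approximate $M$ algebraically, then translate back to a nice continuous rational map. But there is a genuine gap at the central step. You invoke what you call ``the approximation theorem for smooth framed submanifolds by nonsingular algebraic subvarieties, applicable in the range $\dim M \le p-3$'' as though it were an \emph{unconditional} statement. No such unconditional theorem exists, and the paper does not claim one. The relevant result (Proposition~2.3 in the paper) has a strong \emph{hypothesis}: it requires as input a compact manifold $B \subseteq X \times \R$, with trivial normal bundle and boundary $\partial B = (M \times \{0\}) \cup (Z \times \{1\})$, where $Z$ is already a nonsingular Zariski locally closed subvariety of $X$. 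Only given such a cobordism does the dimension bound allow one to slide $M$ onto a nonsingular Zariski locally closed subvariety (i.e.\ obtain a weak algebraic approximation). Without that hypothesis, the conclusion is simply false in general: dimension bounds alone do not force a smooth submanifold to be algebraically approximable.

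The missing ingredient is exactly where hypothesis (\ref{th-1-1-b}) must be used, and you never actually use it: you ``fix a nice continuous rational map $g$ homotopic to $h$'' in the first sentence and then never mention $g$ again. The paper's proof uses the nice continuous rational map (via Theorem~2.4 of \cite{bib16}) precisely to \emph{produce} the Zariski locally closed subvariety $Z$ and the cobordism $B$ that Proposition~2.3 needs. If one could dispense with $g$, the argument would show that \emph{every} $h$ with $\dim X + 3 \le 2p$ is approximable by maps in $\RC_0(X,\SB^p)$, which is a much stronger claim than the theorem and not what the paper asserts. You flag in your last paragraph that the ``quantitative'' approximation is the main obstacle, but the prior obstacle is that even the \emph{qualitative} existence of an algebraic representative has not been established in your sketch; that existence is what $g$ supplies. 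Finally, a smaller point: the paper does not carry the framing through the algebraic approximation step and does not construct the rational map by hand from an algebraic tubular neighborhood; instead it uses the weaker ``weak algebraic approximation'' of the bare submanifold $M$ and then applies Theorem~1.2 of \cite{bib17} as a black box to produce the approximating nice continuous rational maps of $h$. Your hand-built algebraic Pontryagin--Thom construction, while plausible in spirit, would require substantial additional justification (algebraic tubular neighborhoods, compatibility of framings, niceness of the resulting map) that the paper deliberately avoids.
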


Our next result requires some preparation.

For any $k$-dimensional
compact smooth (of class $\C^{\infty}$) manifold $K$, let $[K]$ denote
its fundamental class in the homology group $H_k(K; \Z/2)$. If $T$ is a
topological space and $K$ is a subspace of $T$, we denote by $[K]_T$ the
homology class in $H_k(T;\Z/2)$ represented by $K$, that is, $[K]_T =
i_*([K])$, where $i \colon K \hookrightarrow T$ is the inclusion map.

Let $X$ be a nonsingular real algebraic variety. We denote by $A_k(X)$
the subgroup of $H_k(X; \Z/2)$ generated by all homology classes of the
form $[Z]_X$, where $Z$ is a $k$-dimensional nonsingular Zariski locally
closed subvariety of $X$ that is compact and orientable as a smooth
manifold. Here $Z$ is not assumed to be Zariski closed in $X$.

For any positive integer $p$, let $s_p$ be the unique generator of the
cohomology group $H^p(\SB^p; \Z/2) \cong \Z/2$.

Recall that a smooth manifold is said to be \emph{spin} if it is
orientable and its second Stiefel--Whitney class vanishes.

\begin{theorem}\label{th-1-2}
Let $X$ be a compact nonsingular real algebraic variety of dimension
$p+2$, where $p \geq 5$. Assume that $X$ is a spin manifold. For a
continuous map $h \colon X \to \SB^p$, the following conditions are
equivalent:
\begin{conditions}
\item\label{th-1-2-a} $h$ can be approximated by nice continuous rational
maps.

\item\label{th-1-2-b} $h$ is homotopic to a nice continuous rational
map.

\item\label{th-1-2-c} The homology class Poincar\'e dual to the
cohomology class $h^*(s_p)$ belongs to $A_2(X)$.
\end{conditions}
\end{theorem}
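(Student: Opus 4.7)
Note that the hypothesis $p \geq 5$ is exactly the inequality $\dim X + 3 = p + 5 \leq 2p$, so Theorem~\ref{th-1-1} applies and yields (\ref{th-1-2-a}) $\Leftrightarrow$ (\ref{th-1-2-b}) at once. It thus suffices to prove the equivalence of (\ref{th-1-2-b}) and (\ref{th-1-2-c}).

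For (\ref{th-1-2-b}) $\Rightarrow$ (\ref{th-1-2-c}), I would start from a nice continuous rational map $f \colon X \to \SB^p$ homotopic to $h$. Niceness of $f$ together with Sard's theorem applied to the regular restriction $f|_{X \setminus P(f)}$ produces a point $y \in \SB^p \setminus f(P(f))$ that is a regular value of $f|_{X \setminus P(f)}$. Then $Z := f^{-1}(y)$ is Zariski closed in $X \setminus P(f)$, nonsingular of pure dimension $2$, and closed (hence compact) in $X$. Its normal bundle $\nu_Z$ in $X$ coincides with $f^{*}$ of the normal bundle of $\{y\}$ in $\SB^p$, and so is trivial; combined with the spin hypothesis via the splitting $TX|_Z = TZ \oplus \nu_Z$, this forces $w_1(Z) = w_2(Z) = 0$, whence $Z$ is orientable. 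Thus $[Z]_X \in A_2(X)$, and a standard Poincar\'e-duality argument (collapse of a tube around $Z$ onto a Thom class of $\nu_Z$) identifies $[Z]_X$ with the class dual to $f^{*}(s_p) = h^{*}(s_p)$.

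For the converse (\ref{th-1-2-c}) $\Rightarrow$ (\ref{th-1-2-b}), I would approximate $h$ by a smooth map, pick a regular value $y$, and let $M := h^{-1}(y)$: a smoothly embedded closed orientable $2$-submanifold with trivial normal bundle and $[M]_X$ equal to the Poincar\'e dual of $h^{*}(s_p)$, hence in $A_2(X)$ by hypothesis. The plan is then to establish the byproduct of the paper advertised in the abstract: under these hypotheses, $M$ is framed-cobordant inside $X$ to a compact nonsingular orientable Zariski locally closed $2$-subvariety $Z$ of $X$. The normal bundle of any such $Z$ is automatically trivial, since for $p \geq 3$ a rank-$p$ bundle over a closed surface is trivial once $w_1$ and $w_2$ vanish, both of which follow from the spin hypothesis as above. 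A Thom--Pontryagin collapse built on an algebraic tubular neighbourhood of $Z$, composed with the inverse stereographic projection $\R^p \to \SB^p$, then produces a nice continuous rational map $f \colon X \to \SB^p$ with $f^{-1}(y) = Z$. Since $f$ and $h$ have framed-cobordant Pontryagin manifolds in $X$, they are homotopic.

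The decisive step will be the framed-cobordism lemma just invoked: upgrading the homological equality $[M]_X = \sum [Z_i]_X$ to a smoothly embedded $3$-dimensional cobordism in $X$ along which the normal framing extends, and then coalescing the $Z_i$ into a single nonsingular subvariety via embedded surgeries. The high codimension $p \geq 5$ supplies the general-position room needed for these surgeries, and the spin hypothesis is what makes the relevant normal bundles trivializable. A secondary, more technical task is to promote the topological collapse to an honest continuous rational map; this is where the latitude of allowing an irregularity locus -- i.e., working with \emph{nice} rather than with regular maps -- becomes indispensable, and it is also what prevents the argument from upgrading to a statement about regular maps.
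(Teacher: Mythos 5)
Your reduction of (\ref{th-1-2-a})~$\Leftrightarrow$~(\ref{th-1-2-b}) to Theorem~\ref{th-1-1} via the observation that $\dim X + 3 = p + 5 \leq 2p$ is correct and is a small but genuine economy over the paper, which proves (\ref{th-1-2-a})~$\Rightarrow$~(\ref{th-1-2-b}) trivially and then establishes (\ref{th-1-2-c})~$\Rightarrow$~(\ref{th-1-2-a}) directly. Your argument for (\ref{th-1-2-b})~$\Rightarrow$~(\ref{th-1-2-c}) is sound and is exactly Lemma~\ref{lem-2-7}: niceness plus Sard gives a regular value off $f(P(f))$, the fiber $Z=f^{-1}(y)$ is a compact nonsingular Zariski locally closed surface with trivial normal bundle, spin forces $w_1(Z)=w_2(Z)=0$ so $Z$ is orientable, and the standard duality $D_X(f^*(s_p))=[Z]_X$ puts it in $A_2(X)$.

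The gap is in the converse. You correctly isolate what you call the ``decisive step'' --- promoting the homological relation $[M]_X = \sum [Z_i]_X$ to an \emph{embedded} $3$-dimensional cobordism in $X$ with controlled normal data, after first coalescing the $Z_i$ into a single nonsingular Zariski locally closed surface --- but you leave it as a plan. This step \emph{is} the content of the theorem; it is precisely Lemma~\ref{lem-2-5} (the ``A-distinguished'' bookkeeping, proved via Assertions~\ref{a1} and~\ref{a2}) together with Proposition~\ref{prop-2-6}, and it consumes Lemmas~\ref{lem-2-1}, \ref{lem-2-2} and~\ref{lem-2-4} on the way. In addition your route strictly needs more than the paper's: Proposition~\ref{prop-2-6} only requires the normal bundle of the embedded cobordism to be \emph{trivial} (Lemma~\ref{lem-2-4}), whereas a Thom--Pontryagin homotopy requires a compatible \emph{framing} to extend over the cobordism, matching the given framings on $M$ and on $Z$; you do not address this, and while it is plausibly harmless in codimension $p\geq 5$, it is an extra claim. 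Finally, ``promote the topological collapse to an honest continuous rational map'' is not a secondary technical task: the paper outsources it entirely to \cite[Theorem~1.2]{bib17}, whose statement is exactly ``weak algebraic approximation of the fiber implies approximation of $h$ by nice continuous rational maps,'' and which lands directly on (\ref{th-1-2-a}) rather than only on (\ref{th-1-2-b}). In short, the skeleton of your proposal is right and you have correctly named the bottleneck, but the bottleneck is unresolved; cite Proposition~\ref{prop-2-6} and \cite[Theorem~1.2]{bib17}, or prove their analogues, to close it.
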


Denote by
\begin{equation*}
\rho \colon H_*(-; \Z) \to H_*(-; \Z/2)
\end{equation*}
the reduction modulo $2$ homomorphism.

\begin{corollary}\label{cor-1-3}
Let $X$ be a compact nonsingular real algebraic variety of dimension
${p+2}$, where $p \geq 5$. Assume that $X$ is a spin manifold. Then the
following conditions are equivalent:
\begin{conditions}
\item\label{cor-1-3-a} Each continuous map from $X$ into $\SB^p$ can be
approximated by nice continuous rational maps.

\item\label{cor-1-3-b} Each continuous map from $X$ into $\SB^p$ is
homotopic to a nice continuous rational map.

\item\label{cor-1-3-c} $A_2(X) = \rho(H_2(X; \Z))$.
\end{conditions}
\end{corollary}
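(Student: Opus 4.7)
The equivalence \ref{cor-1-3-a}~$\Leftrightarrow$~\ref{cor-1-3-b} is immediate from Theorem~\ref{th-1-2} applied to each $h\in\C(X,\SB^p)$ individually, so the substance lies in the connection with~\ref{cor-1-3-c}. Write $D$ for Poincar\'e duality on the compact oriented manifold $X$ (with both integral and $\Z/2$ coefficients), and set
\begin{equation*}
\mathcal{S}(X)=\bigl\{D(h^*(s_p)):h\in\C(X,\SB^p)\bigr\}\subseteq H_2(X;\Z/2).
\end{equation*}
By Theorem~\ref{th-1-2}, condition~\ref{cor-1-3-b} is equivalent to $\mathcal{S}(X)\subseteq A_2(X)$. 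The plan is therefore to prove
\begin{equation*}
\mathcal{S}(X)=\rho\bigl(H_2(X;\Z)\bigr),
\end{equation*}
for then \ref{cor-1-3-b} reads $\rho(H_2(X;\Z))\subseteq A_2(X)$, which combined with the easy reverse inclusion $A_2(X)\subseteq\rho(H_2(X;\Z))$ (any compact orientable smooth surface $Z$ carries an integral fundamental class whose pushforward reduces modulo $2$ to $[Z]_X$) is precisely~\ref{cor-1-3-c}.

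The inclusion $\mathcal{S}(X)\subseteq\rho(H_2(X;\Z))$ is immediate: $s_p=\rho(\sigma_p)$ for the integral generator $\sigma_p\in H^p(\SB^p;\Z)$, and $D$ commutes with $\rho$ on the oriented manifold $X$, so $D(h^*(s_p))=\rho(D(h^*(\sigma_p)))$. The main obstacle is the reverse inclusion: given $\beta\in H_2(X;\Z)$, produce $h\colon X\to\SB^p$ with $D(h^*(s_p))=\rho(\beta)$. Setting $\alpha=D^{-1}(\beta)\in H^p(X;\Z)$, it suffices to realise $\alpha$ as $h^*(\sigma_p)$, that is, to lift a classifying map $X\to K(\Z,p)$ of $\alpha$ through the inclusion $\SB^p\hookrightarrow K(\Z,p)$.

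For this I would run obstruction theory along the Moore--Postnikov tower: the homotopy fibre of $\SB^p\to K(\Z,p)$ is $p$-connected with $\pi_q=\pi_q(\SB^p)$ for $q>p$, so obstructions live in $H^{q+1}(X;\pi_q(\SB^p))$ for $q\geq p+1$. The dimension hypothesis $\dim X=p+2$ kills everything with $q\geq p+2$, leaving a single obstruction in $H^{p+2}(X;\pi_{p+1}(\SB^p))=H^{p+2}(X;\Z/2)$ (using $p\geq 5\geq 3$, so that $\pi_{p+1}(\SB^p)=\Z/2$). This obstruction is the pull-back of the first $k$-invariant of $\SB^p$, classically the generator $\mathrm{Sq}^2\iota_p\in H^{p+2}(K(\Z,p);\Z/2)$, and therefore equals $\mathrm{Sq}^2\rho(\alpha)$. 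By Wu's formula, $\mathrm{Sq}^2 x=v_2\cup x$ for $x\in H^p(X;\Z/2)=H^{\dim X-2}(X;\Z/2)$, and the spin hypothesis forces $v_2=w_2+w_1^2=0$. Hence the obstruction vanishes, $\alpha$ lifts to some $h\colon X\to\SB^p$, and $\mathcal{S}(X)=\rho(H_2(X;\Z))$.

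The decisive technical input is this Postnikov-tower plus Wu-formula calculation pinning down the image of $h\mapsto D(h^*(s_p))$; combined with Theorem~\ref{th-1-2} and the automatic inclusion $A_2(X)\subseteq\rho(H_2(X;\Z))$, it delivers the three-way equivalence.
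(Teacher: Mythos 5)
Your proof is correct, and it takes a genuinely different route at the one step that is not an immediate consequence of Theorem~\ref{th-1-2}: showing that every class in $\rho(H_2(X;\Z))$ arises as $D_X(h^*(s_p))$ for some $h\colon X\to\SB^p$. The paper proceeds geometrically: it represents the given integral class by a compact oriented surface $M\subset X$, invokes Lemma~\ref{lem-2-4} (spin hypothesis $\Rightarrow$ trivial normal bundle to $M$), and then cites Thom's realization theorem (essentially the Pontryagin--Thom collapse on a framed tubular neighborhood of $M$) to produce $h$ with $D_X(h^*(s_p))=[M]_X$; it then uses Lemma~\ref{lem-2-7} to push $D_X(h^*(s_p))$ into $A_2(X)$. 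You work instead in cohomology and homotopy theory: lift the classifying map $X\to K(\Z,p)$ of the Poincar\'e dual $\alpha\in H^p(X;\Z)$ through $\SB^p\to K(\Z,p)$ via the Moore--Postnikov tower, observe that for $\dim X=p+2$ the single surviving obstruction lies in $H^{p+2}(X;\pi_{p+1}\SB^p)\cong H^{p+2}(X;\Z/2)$ and equals $\mathrm{Sq}^2\rho(\alpha)$ (the pullback of the classical first $k$-invariant of $\SB^p$), and kill it by Wu's formula together with $v_2=w_2+w_1^2=0$ from the spin hypothesis. Both arguments therefore use the spin assumption in an essential but structurally different guise (triviality of normal bundles to orientable $\leq 3$-dimensional submanifolds, versus vanishing of the second Wu class), and your route trades the paper's Lemma~\ref{lem-2-4}/Thom-realization input for the Postnikov/$\mathrm{Sq}^2$/Wu computation; each approach is standard, and each cleanly isolates where spin enters. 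Your housekeeping (the containment $A_2(X)\subseteq\rho(H_2(X;\Z))$ for orientable representatives, and the reduction of condition~(\ref{cor-1-3-b}) via Theorem~\ref{th-1-2} to the inclusion $\mathcal{S}(X)\subseteq A_2(X)$) is also correct.
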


In some cases, condition~(\ref{cor-1-3-c}) in Corollary~\ref{cor-1-3}
can easily be verified.

\begin{example}\label{ex-1-4}
Let $X = C_1 \times \cdots \times C_{p+2}$, where each $C_i$ is a
compact connected nonsingular real algebraic curve, $1 \leq i \leq p+2$.
If $p \geq 5$, then the set $\RC_0(X, \SB^p)$ of nice continuous
rational maps is dense in $\C(X, \SB^p)$. Indeed, $A_2(X) = H_2(X;
\Z/2)$ and hence the assertion follows from Corollary~\ref{cor-1-3}.
\end{example}

If $X$ is as in Example~\ref{ex-1-4}, then in general there exist
continuous maps from $X$ into $\SB^p$ that cannot be approximated by
regular maps, cf.~\cite{bib6}.

According to \cite{bib17}, for any positive integers $n$ and $p$, the
set $\RC_0(\SB^n, \SB^p)$ of nice continuous rational maps is dense in
$\C(\SB^n, \SB^p)$. In this paper we obtain other density results. As in
\cite{bib2, bibb}, given a compact nonsingular real algebraic variety
$X$, we denote by $\Halg_k(X; \Z/2)$ the subgroup of $H_k(X; \Z/2)$
generated by all homology classes represented by $k$-dimensional Zariski
closed (possibly singular) subvarieties of $X$. It easily follows that
\begin{equation*}
A_k(X) \subseteq \Halg_k(X; \Z/2).
\end{equation*}
%%
%By \cite[Proposition~1.3]{bib16}, if $h \colon X \to \SB^p$ is a
%continuous map homotopic to a continuous rational map, then the homology
%class Poincar\'e dual to $h^*(s_p)$ belongs to $\Halg_k(X; \Z/2)$, where
%$k = \dim X - p$. In view of the K\"unneth formula, we have
%%
%\begin{equation*}
%\Halg_k(X; \Z/2) = H_k(X; \Z/2)
%\end{equation*}
%%
%if and only if
%%
%\begin{equation*}
%\Halg_k(X \times \SB^d; \Z/2) = H_k(X \times \SB^d; \Z/2),
%\end{equation*}
%%
%provided that $k \leq d$.
If $k \leq d$ and
\begin{equation*}
\Halg_k(X; \Z/2) = H_k(X; \Z/2)
\end{equation*}
then the K\"unneth formula implies that
\begin{equation*}
\Halg_k(X \times \SB^d; \Z/2) = H_k(X \times \SB^d; \Z/2).
\end{equation*}
Conversely, the latter equality implies the former (with no restriction
on $k$ and $d$) since
\begin{equation*}
\pi_* (H_k(X \times \SB^d; \Z/2)) = H_k(X; \Z/2),
\end{equation*}
where $\pi \colon X \times \SB^d \to X$ is the canonical projecion, and
$\Halg_k(-; \Z/2)$ is functorial for regular maps between compact real
algebraic varieties, cf. \cite[{}5.12]{bib8} or \cite[p.~53]{biba}.

\begin{theorem}\label{th-1-5}
Let $X$ be a compact nonsingular real algebraic variety of dimension
$n$. Let $d$ and $p$ be positive integers satisfying $n+1 \leq p$ and
$n+2d+1 \leq 2p$. If
\begin{equation*}
\Halg_i(X; \Z/2) = H_i(X; \Z/2)
\end{equation*}
for every integer $i$ with $0 \leq i \leq n+d-p$, then the set $\RC_0(X
\times \SB^d, \SB^p)$ of nice continuous rational maps is dense in $\C(X
\times \SB^d, \SB^p)$.
\end{theorem}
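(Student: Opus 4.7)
The plan is to reduce the density statement to a homotopy statement via Theorem~\ref{th-1-1} applied to the compact nonsingular variety $Y := X \times \SB^d$, whose dimension is $n+d$. The hypothesis $\dim Y + 3 \le 2p$ of Theorem~\ref{th-1-1} is implied by $n+2d+1 \le 2p$ exactly when $d \ge 2$. In the residual case $d=1$, the assumption $n+1 \le p$ forces $\dim Y \le p$, so the conclusion already follows either from the trivial case $\dim Y < p$ or from the density of $\RC_0$ in $\C$ at $\dim Y = p$ recorded in~\cite{bib17}. I therefore assume $d \ge 2$, so that the task reduces to showing that every continuous map $h \colon Y \to \SB^p$ is homotopic to a nice continuous rational map.

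To carry this out, I would smooth $h$, pick a regular value $s_0 \in \SB^p$ by Sard's theorem, and set $M := h^{-1}(s_0)$, a closed smooth submanifold of $Y$ of dimension $k := n+d-p$ whose $\Z/2$-homology class $[M]_Y$ is Poincar\'e dual to $h^*(s_p)$. By the K\"unneth formula, $H_k(Y;\Z/2) \cong H_k(X;\Z/2) \oplus H_{k-d}(X;\Z/2)$, and the second summand vanishes since $k-d = n-p < 0$ by the hypothesis $n+1 \le p$. The same hypothesis gives $k \le d$, so the K\"unneth-based implication recorded just before the theorem upgrades the assumption $\Halg_k(X;\Z/2) = H_k(X;\Z/2)$ (available because $k \le n+d-p$) to $\Halg_k(Y;\Z/2) = H_k(Y;\Z/2)$. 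Consequently $[M]_Y$ is represented by a $k$-dimensional algebraic cycle on $Y$.

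The technical heart of the proof, and the step I expect to be the main obstacle, is to promote this algebraic-cycle representative of $[M]_Y$ to a nonsingular Zariski locally closed subvariety $N \subset Y$ smoothly isotopic to $M$, with a compatible framing of its normal bundle, and then to construct a nice continuous rational map $g \colon Y \to \SB^p$ realizing $N$ as $g^{-1}(s_0)$. The inequality $n+2d+1 \le 2p$ is equivalent to $2k+1 \le n$; this stable-range condition permits Akbulut--King type resolution and smooth-isotopy arguments to yield such $N$, while the explicit blowup-and-collapse construction of nice continuous rational maps into spheres developed in~\cite{bib17} should furnish the desired $g$. Once $g$ is shown homotopic to $h$ via the Pontryagin--Thom correspondence, Theorem~\ref{th-1-1} delivers the approximation of $h$ by nice continuous rational maps, completing the argument.
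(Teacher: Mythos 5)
Your reduction to the ``homotopic to a nice continuous rational map'' statement via Theorem~\ref{th-1-1} is an unnecessary detour (the paper goes directly through Proposition~\ref{prop-2-3} together with \cite[Theorem~1.2]{bib17}), but it is not where the real trouble lies, and your $d=1$ dispatch is fine. The genuine gap is in the central claim that the transverse preimage $M = h^{-1}(s_0) \subseteq Y = X \times \SB^d$ can be promoted to a smoothly isotopic nonsingular Zariski locally closed subvariety. You justify this by showing, via K\"unneth, that $\Halg_k(Y;\Z/2) = H_k(Y;\Z/2)$, i.e.\ that the homology class $[M]_Y$ is represented by an algebraic cycle, and then gesture at ``Akbulut--King type resolution.'' But algebraicity of the homology class is a far weaker statement than the existence of an isotopy of $M$ onto a subvariety; the latter is an unoriented bordism-level condition on the inclusion $M \hookrightarrow Y$, not a homology-level one. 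Knowing that some (possibly singular, possibly very differently embedded) $k$-cycle represents $[M]_Y$ does not, by itself or by resolution of singularities, move $M$ isotopically onto a nonsingular subvariety.

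The paper's Proposition~\ref{prop-2-9} closes exactly this gap, and in a way your argument does not capture. It first uses the Whitney range ($2k+1 \leq n$ and $k+1 \leq d$) to isotope $M$ inside $X \times \SB^d$ onto a product $N \times \{y_0\}$ with $N \subseteq X$; then it invokes Lemma~\ref{lem-2-8} to conclude that the unoriented bordism class in $\NF_k(X)$ of the inclusion $N \hookrightarrow X$ is algebraic --- and this step genuinely uses the hypothesis $\Halg_i(X;\Z/2) = H_i(X;\Z/2)$ for \emph{all} $i$ with $0 \leq i \leq k$, not merely $i = k$ as in your K\"unneth computation; finally it cites \cite[Theorem~F]{bib1}, which upgrades algebraicity of the bordism class of $N \hookrightarrow X$ to the existence of a nonsingular Zariski locally closed subvariety of $X \times \SB^d$ isotopic to $N \times \{y_0\}$. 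Your proposal correctly identifies the relevant inequalities ($k + 1 \leq d$ and $2k+1 \leq n$) and the rough shape of the reduction, but it omits both the product-normalization isotopy and the bordism-theoretic input (Lemma~\ref{lem-2-8} and the full range of the hypothesis on $\Halg_i$), which are what actually make the isotopy-to-subvariety conclusion available.
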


It is worthwhile to record the following observation.

\begin{example}\label{ex-1-6}
Let $d$, $n$ and $p$ be positive integers satisfying one of the following
two conditions:
\begin{iconditions}
\item $n + d \geq 7$ and $p = n + d - 2$;
\item $n + 1 \leq p$ and $n + 2d + 1 \leq 2p$.
\end{iconditions}
In view of Corollary~\ref{cor-1-3} and Theorem~\ref{th-1-5}, the set
$\RC_0(\SB^n \times \SB^d, \SB^p)$ of nice continuous rational maps is
dense in $\C(\SB^n \times \SB^d, \SB^p)$. Furthermore, by
\cite[Corollary~1.3, Theorem~1.7]{bib17}, ${\RC_0(\SB^n \times \SB^d,
\SB^p)}$ is dense in $\C(\SB^n \times \SB^d, \SB^p)$ if $n+d-p \leq 1$. It
would be interesting to decide whether or not this density assertion
holds with no restrictions on $d$, $n$ and $p$.
\end{example}

We have one more density result.

\begin{theorem}\label{th-1-7}
Let $X$ be a compact nonsingular real algebraic variety of dimension
$p$, where $p \geq 5$. If
\begin{equation*}
\Halg_2(X; \Z/2) = H_2(X; \Z/2)
\end{equation*}
and $X$ is a spin manifold, then the set $\RC_0(X \times \SB^2, \SB^p)$
of nice continuous rational maps is dense in $\C(X \times \SB^2, \SB^p)$.
\end{theorem}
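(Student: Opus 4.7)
The strategy is to apply Corollary~\ref{cor-1-3} to the product $Y := X \times \SB^2$, which is a compact nonsingular real algebraic variety of dimension $p + 2 \geq 7$. First, $Y$ is a spin manifold: since $X$ and $\SB^2$ are both orientable we have $w_1(X) = w_1(\SB^2) = 0$, and both have trivial second Stiefel--Whitney class, so the Whitney sum formula for the tangent bundle of a product gives $w_2(Y) = w_2(X) + w_1(X) \smile w_1(\SB^2) + w_2(\SB^2) = 0$. Hence it suffices to establish the identity $A_2(Y) = \rho(H_2(Y; \Z))$, which by Corollary~\ref{cor-1-3} implies that every continuous map $Y \to \SB^p$ is approximable by nice continuous rational maps.

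Fix basepoints $x_0 \in X$ and $s_0 \in \SB^2$, and let $j \colon X \hookrightarrow Y$, $x \mapsto (x, s_0)$, be the corresponding inclusion. Because $H_1(\SB^2; \Z) = 0$, the K\"unneth theorem yields
\begin{equation*}
H_2(Y; \Z) = j_* H_2(X; \Z) \oplus \Z \cdot [\{x_0\} \times \SB^2]_Y,
\end{equation*}
and analogously with $\Z/2$-coefficients. After reduction modulo $2$ the desired identity $A_2(Y) = \rho(H_2(Y; \Z))$ splits into two claims:
\begin{iconditions}
\item $[\{x_0\} \times \SB^2]_Y \in A_2(Y)$;
\item $j_*(\rho(H_2(X; \Z))) \subseteq A_2(Y)$.
\end{iconditions}
Claim~(i) is immediate: the subvariety $\{x_0\} \times \SB^2$ is Zariski closed in $Y$, nonsingular, compact, and orientable (being diffeomorphic to $\SB^2$), so its class lies in $A_2(Y)$ by definition.

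For claim~(ii), fix $\beta_\Z \in H_2(X; \Z)$ and set $\beta := \rho(\beta_\Z) \in H_2(X; \Z/2)$. Since $\dim X = p \geq 5$, the Thom realization theorem represents $\beta_\Z$ by a smoothly embedded closed oriented surface $\Sigma \subset X$, so that $\Sigma \times \{s_0\} \subset Y$ is a smoothly embedded closed orientable surface with $[\Sigma \times \{s_0\}]_Y = j_*(\beta)$. We then invoke the approximation of smooth submanifolds by nonsingular algebraic subvarieties announced in the abstract: the hypothesis $\Halg_2(X; \Z/2) = H_2(X; \Z/2)$, which via K\"unneth upgrades to $\Halg_2(Y; \Z/2) = H_2(Y; \Z/2)$, supplies a (possibly singular) algebraic model for $j_*(\beta)$, and the two extra dimensions provided by the $\SB^2$-factor furnish enough room to deform $\Sigma \times \{s_0\}$ onto a nonsingular Zariski locally closed compact orientable 2-dimensional subvariety $V \subset Y$ with $[V]_Y = j_*(\beta)$. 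This places $j_*(\beta)$ in $A_2(Y)$ and completes the reduction.

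The main obstacle is this last step, namely producing a \emph{nonsingular} and \emph{orientable} algebraic representative of $j_*(\beta)$ inside $Y$. The singular algebraic cycle in $X$ guaranteed by $\Halg_2(X; \Z/2) = H_2(X; \Z/2)$ does not yield such a representative by a naive product construction, and the smooth orientable surface $\Sigma$ is not a priori algebraic. Combining these two pieces of data using the extra room in the $\SB^2$-factor, while preserving orientability and the mod~$2$ homology class, is precisely what the approximation theorem for smooth submanifolds by nonsingular subvarieties is designed to achieve, and the spin hypothesis on $X$ (propagated to $Y$) is needed in order to apply Corollary~\ref{cor-1-3} at the end.
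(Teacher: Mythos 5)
Your overall strategy -- verifying the hypothesis of Corollary~\ref{cor-1-3} for $Y = X \times \SB^2$ by decomposing $H_2(Y;\Z)$ via K\"unneth -- is the same as the paper's. The spin verification and claim~(i) are fine. But there is a genuine gap in claim~(ii): you acknowledge that the crux is converting the smooth oriented surface $\Sigma \times \{s_0\}$ into a nonsingular Zariski locally closed subvariety of $Y$, and you then gesture at ``the approximation theorem for smooth submanifolds by nonsingular subvarieties'' without identifying which result you mean or checking its hypotheses. In particular, the K\"unneth upgrade $\Halg_2(Y;\Z/2) = H_2(Y;\Z/2)$ does not by itself produce nonsingular orientable representatives: the Zariski closed subvarieties generating $\Halg_2$ may be singular, and there is no general principle that ``two extra dimensions'' alone suffice to replace them by nonsingular models in the same class.

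The mechanism the paper uses is specific and worth stating. Lemma~\ref{lem-2-8} converts the hypothesis $\Halg_2(X;\Z/2) = H_2(X;\Z/2)$ (the only nontrivial level, since $\NF_1(\textrm{point}) = 0$ and $\Halg_0 = H_0$ automatically) into the assertion that the unoriented bordism class of the inclusion $\Sigma \hookrightarrow X$ is algebraic, i.e.\ representable by a regular map from a compact nonsingular real algebraic surface. Then \cite[Theorem~F]{bib1}, applied with the extra room furnished by $\SB^2 \supset \R^2$, shows that $\Sigma \times \{s_0\}$ is smoothly \emph{isotopic} to a nonsingular Zariski locally closed subvariety $Z$ of $Y$. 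Orientability of $Z$ is then automatic, as $Z$ is diffeomorphic to $\Sigma$, so $[Z]_Y = [\Sigma \times \{s_0\}]_Y$ lies in $A_2(Y)$. Without this chain -- Lemma~\ref{lem-2-8} followed by \cite[Theorem~F]{bib1} -- your step from the data guaranteed by $\Halg_2$ to a nonsingular orientable representative is unjustified, and that step is exactly where the content of the theorem resides.
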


In topology, one often achieves stabilization effects by making use of
the suspension. For problems involving continuous rational maps, the
following construction can serve as a substitute for the suspension.

For any positive integer $p$, let
\begin{equation*}
\sigma_p \colon \SB^p \times \SB^1 \to \SB^{p+1}
\end{equation*}
be a continuous map such that for some nonempty open subset $U_p$ of
$\SB^{p+1}$, the restriction $\sigma_{U_p} \colon \sigma_p^{-1}(U_p) \to
U_p$ of $\sigma_p$ is a smooth diffeomorphism. We denote by $\one$ the
identity map of~$\SB^1$.

\begin{theorem}\label{th-1-8}
Let $X$ be a compact nonsingular real algebraic variety and let $p$ be a
positive integer. For any continuous map $h \colon X \to \SB^p$, the
following conditions are equivalent:
\begin{conditions}
\item\label{th-1-8-a} The map $\sigma_p \circ (h \times \one) \colon X
\times \SB^1 \to \SB^{p+1}$ can be approximated by nice continuous
rational maps.

\item\label{th-1-8-b} The map $\sigma_p \circ (h \times \one) \colon X
\times \SB^1 \to \SB^{p+1}$ is homotopic to a nice continuous rational
map.
\end{conditions}
\end{theorem}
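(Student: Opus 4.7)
The direction (\ref{th-1-8-a}) $\Rightarrow$ (\ref{th-1-8-b}) is immediate: every continuous map into $\SB^{p+1}$ has an open neighborhood in the compact-open topology consisting of maps in the same homotopy class, so any approximating sequence is eventually homotopic to its limit.

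For (\ref{th-1-8-b}) $\Rightarrow$ (\ref{th-1-8-a}), write $g := \sigma_p \circ (h \times \one)$. The essential structural feature to exploit is that for every $y \in U_p$, setting $(s_y, t_y) := \sigma_p^{-1}(y) \in \SB^p \times \SB^1$, we have
\[
g^{-1}(y) \;=\; h^{-1}(s_y) \times \{t_y\},
\]
so every level fiber is concentrated at a single height in the $\SB^1$-factor. This rigidity is what ought to allow the theorem to hold without any dimension restriction, in contrast to Theorem \ref{th-1-1}.

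My plan has three steps. First, construct a nice continuous rational map $\tau_p \colon \SB^p \times \SB^1 \to \SB^{p+1}$ homotopic to $\sigma_p$ and still a smooth diffeomorphism over some nonempty open subset of $\SB^{p+1}$. Such a $\tau_p$ can be produced explicitly from a polynomial formula, if necessary after a small algebraic modification enforcing the local diffeomorphism property. Since $\tau_p \circ (h \times \one) \simeq g$, it suffices to approximate the former. Second, apply hypothesis (\ref{th-1-8-b}) to extract a nice continuous rational map $f_0$ homotopic to $g$; for a generic $y_0$ lying in the open set on which $\tau_p$ is a diffeomorphism and outside $f_0(P(f_0))$, the preimage $f_0^{-1}(y_0)$ is a nonsingular Zariski locally closed subvariety of $X \times \SB^1$ that is framed cobordant to $g^{-1}(y_0)$. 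Third, invert the Pontryagin--Thom construction algebraically: using $f_0^{-1}(y_0)$ as the algebraic representative of the framed cobordism class of $g^{-1}(y_0)$, combine it with $\tau_p$ (and with a regular retraction of an algebraic tubular neighborhood onto $\SB^p \times \SB^1$) to produce a nice continuous rational map close to $g$.

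The main obstacle lies in this third step. With no dimension restriction on $X \times \SB^1$, metastable-range arguments of the kind underlying Theorem \ref{th-1-1} are unavailable. Instead, one must exploit the slice product structure $g^{-1}(y_0) = h^{-1}(s_{y_0}) \times \{t_{y_0}\}$, which permits a reduction of the algebraic approximation problem on $X \times \SB^1$ to one that effectively lives on the single slice $X \times \{t_{y_0}\}$. Making this reduction precise while preserving rationality and, crucially, the niceness condition on the irregularity locus---and using the techniques for approximating smooth submanifolds by nonsingular subvarieties mentioned in the abstract---is the technical heart of the argument.
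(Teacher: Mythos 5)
Your proposal correctly identifies the structural feature that makes the theorem work without any dimension restriction — that the fiber $g^{-1}(z_0)$ sits in a single slice $M \times \{v_0\}$ — and your Step~2 (extracting a bordism datum from hypothesis~(\ref{th-1-8-b}) via a nice continuous rational representative) is in line with the paper, which invokes \cite[Theorem~2.4]{bib16} for precisely that. However, your Step~3 is where you need to actually prove something, and you stop at ``this is the technical heart of the argument'' without supplying the mechanism. There is a genuine gap here, and it is the central step of the paper's proof.

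The missing idea is a bordism projection argument, not an algebraic inversion of Pontryagin--Thom. Once \cite[Theorem~2.4]{bib16} gives a compact manifold $P$ with $\partial P = (M \times \{v_0\} \times \{0\}) \cup (Z \times \{1\})$ embedded in $X \times \SB^1 \times \R$, where $Z$ is a nonsingular Zariski locally closed subvariety of $X \times \SB^1$, one composes with the \emph{canonical projection onto the $X$-factor} to obtain $F \colon P \to X$. This $F$ restricts to the inclusion $M \hookrightarrow X$ on one boundary piece and to a \emph{regular} map on $Z \times \{1\}$ (since the projection is regular). Hence the unoriented bordism class of $M \hookrightarrow X$ in $\NF_*(X)$ is algebraic. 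Applying \cite[Theorem~F]{bib1}, the slice $M \times \{0\}$ admits a weak algebraic approximation in $X \times \R$, and since $\R$ is biregularly isomorphic to $\SB^1$ minus a point, $M \times \{v_0\}$ admits a weak algebraic approximation in $X \times \SB^1$; the conclusion then follows from \cite[Theorem~1.2]{bib17}. Note in particular that the criterion is purely about the fiber and its weak algebraic approximation, so your Step~1 — replacing $\sigma_p$ by an algebraic $\tau_p$ — is unnecessary; $\sigma_p$ need have no algebraic structure at all. The ``reduction to the single slice'' you gesture at is accomplished precisely by this projection-to-$X$ trick, and without it the argument does not close.
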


In Section~\ref{sec-2} we derive the results stated above from certain
results concerning approximation of smooth submanifolds by nonsingular
subvarieties. Approximation of smooth submanifolds, being of independent
interest, is further investigated in Section~\ref{sec-3}.

\section{\texorpdfstring{Weak algebraic approximation of smooth\\
submanifolds}{Weak algebraic approximation of smooth submanifolds}}\label{sec-2}
For any smooth manifolds (with possibly nonempty boundary) $N$ and $P$,
let $\C^{\infty}(N,P)$ denote the space of all smooth maps from $N$ into
$P$ endowed with the $\C^{\infty}$ topology, cf.~\cite{bib11}. The source
manifold will always be assumed to be compact, and hence the weak
$\C^{\infty}$ topology coincides with the strong one.

Let $X$ be a nonsingular real algebraic variety. A compact smooth
submanifold $M$ of $X$ is said to \emph{admit a weak algebraic
approximation in $X$} if each neighborhood of the inclusion map $M
\hookrightarrow X$ in the space $\C^{\infty}(M,X)$ contains a smooth
embedding $e \colon M \to X$ such that $e(M)$ is a nonsingular Zariski
locally closed subvariety of $X$. If $e$ can be chosen so that $e(M)$ is
a nonsingular Zariski closed subvariety of $X$, then $M$ is said to
\emph{admit an algebraic approximation in $X$}.

Weak algebraic approximation will be essential for the proofs of
Theorems~\ref{th-1-1}, \ref{th-1-2}, \ref{th-1-5}, \ref{th-1-7} and~\ref{th-1-8}. It is also of
independent interest, cf.~\cite[Theorems~A and~F]{bib1}. In order to avoid
unnecessary restrictions, we do not assume that the ambient variety $X$
is compact. Our criteria for weak algebraic approximation are presented
in Propositions~\ref{prop-2-3} and~\ref{prop-2-6}.

For any real algebraic variety $V$, let $\Reg(V)$ denote its locus of
nonsingular points in dimension $\dim V$.

\begin{lemma}\label{lem-2-1}
Let $X$ be a nonsingular real algebraic variety and let $M$ be a compact
smooth submanifold of $X$. Assume that there exists a Zariski closed
subvariety $A$ of $X$ such that $M \cap \Reg(A) = \varnothing$ and
\begin{equation*}
M \cup \Reg(A) = \partial P,
\end{equation*}
where $P$ is a compact smooth manifold with boundary $\partial P$,
embedded in $X$ with trivial normal bundle and satisfying $P \cap A =
\Reg(A)$. If $S = A \setminus \Reg(A)$, then $M \subseteq X \setminus S$
and $M$ admits an algebraic approximation in $X\setminus S$. In
particular, $M$ admits a weak algebraic approximation in $X$.
\end{lemma}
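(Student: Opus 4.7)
\emph{Proof proposal.} The plan is to first verify the set-theoretic inclusion $M \subseteq X \setminus S$, then exploit the trivial normal bundle of the cobordism $P$ together with the algebraicity of $\Reg(A)$ inside $X \setminus S$ to construct a smooth defining map for $M$, and finally to approximate this map by a regular map whose zero locus yields the desired algebraic approximation of $M$.

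For the containment, note that $M \subseteq \partial P \subseteq P$ and $P \cap A = \Reg(A)$, so $M \cap A \subseteq \Reg(A)$; combined with $M \cap \Reg(A) = \varnothing$ this gives $M \cap A = \varnothing$, hence $M \cap S = \varnothing$, so $M \subseteq X \setminus S$. The same computation shows $P \subseteq X \setminus S$. Thus inside the nonsingular real algebraic variety $X \setminus S$ we have: $\Reg(A)$ as a nonsingular Zariski closed subvariety (since $A$ is Zariski closed and its $\dim A$-singular locus $S$ has been removed), $M$ as a compact smooth submanifold disjoint from $\Reg(A)$, and $P$ as a compact smooth manifold with boundary $M \cup \Reg(A)$ embedded with trivial normal bundle.

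Set $k = \codim_X P$. Using the trivial normal bundle, choose a tubular embedding $\Phi \colon P \times \R^k \to U \subseteq X \setminus S$ with $\Phi(p,0) = p$, and let $u = (u_1, \ldots, u_k)$ be the pullbacks of the coordinate functions on $\R^k$, so that $u$ cuts out $P$ transversely in $U$. Arrange near $\Reg(A)$ that $u$ coincides with a regular defining system for $\Reg(A)$ in $X \setminus S$ (possible because $\Reg(A)$ is a nonsingular Zariski closed subvariety of $X \setminus S$ of the right codimension). Combine with a smooth function $\rho$ on a neighborhood of $P$ in $X \setminus S$ having $M$ as a regular level set and positive on $\Reg(A)$, and set $\varphi = (u,\rho) \colon X \setminus S \to \R^{k+1}$: then $\varphi^{-1}(0)$ has $M$ as a compact component and $0$ is a regular value of $\varphi$ along $M$. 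Approximating $\varphi$ in the $\mathcal{C}^{\infty}$ topology on a compact neighborhood of $M$ by a regular map $g \colon X \setminus S \to \R^{k+1}$, via the Weierstrass-type density of regular maps into affine space on compact subsets (cf.~\cite{bib2}), and extracting the component of $g^{-1}(0)$ close to $M$, one produces a smooth embedding $e \colon M \to X \setminus S$ close to the inclusion whose image is a nonsingular Zariski closed subvariety of $X \setminus S$.

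The main obstacle will be ensuring that the component of the zero set of the regular approximation that is close to $M$ is a genuinely \emph{Zariski closed} subvariety of $X \setminus S$, not merely a smooth submanifold close to $M$; this is the step where the precise algebraic control of $\varphi$ near $\Reg(A)$ is essential, and where the other irreducible components of $g^{-1}(0)$ (themselves Zariski closed) must be removed in order to isolate the one approximating $M$. The final assertion that $M$ admits a weak algebraic approximation in $X$ is then immediate, since a nonsingular Zariski closed subvariety of the Zariski open set $X \setminus S$ is automatically a nonsingular Zariski locally closed subvariety of $X$.
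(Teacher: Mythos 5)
The set-theoretic part of your argument is correct: $M \subseteq X \setminus S$, $P \subseteq X \setminus S$, and a nonsingular Zariski closed subvariety of $X \setminus S$ is indeed Zariski locally closed in $X$. But the step you explicitly label ``the main obstacle'' and leave unresolved is not a loose end --- it is the entire content of the lemma. Approximating $\varphi$ by a regular map $g$ and ``extracting the component of $g^{-1}(0)$ close to $M$'' does not by itself work: the irreducible component of $g^{-1}(0)$ meeting a neighborhood of $M$ need not stay near $M$, the Euclidean-connected piece near $M$ need not be Zariski closed, and transversality of the approximant along the relevant locus is not automatic. Making this work requires the relative algebraicity technique in the spirit of Akbulut--King and Benedetti--Tognoli, in which the algebraic structure is propagated from $\Reg(A)$ across $P$ to $M$ by a carefully controlled polynomial system; you gesture at ``precise algebraic control of $\varphi$ near $\Reg(A)$'' but supply none of it, and you also implicitly assume $\Reg(A)$ has a global regular defining system of codimension $k$, which needs justification.

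The paper's proof moreover begins with a reduction you skip: via Hironaka's theorem on resolution of singularities, $X$ is realized as a Zariski open subvariety of a compact nonsingular variety $X'$, $A$ is replaced by its Zariski closure $A'$ in $X'$, and the identity $\Reg(A) = \Reg(A')$ is used so that the problem is posed over a compact ambient variety. The remaining approximation argument is then quoted from the proof of Proposition~2.7 in \cite{bib17}. Without that reference or an actual execution of the relative algebraicity step, your proposal is a strategy outline rather than a proof.
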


\begin{proof}
According to Hironaka's theorem on resolution of singularities
\cite{bib10} (cf. also \cite{bib14} for a~very readable exposition), we
can assume that $X$ is a Zariski open subvariety of a~compact
nonsingular real algebraic variety $X'$. Note that either $A =
\varnothing$ or $\Reg(A)$ is a~compact smooth submanifold of $X$. If
$A'$ is a Zariski closure of $A$ in $X'$, then $A = A' \cap X$ and
$\Reg(A) = \Reg(A')$. Consequently, we can assume without loss of
generality that $X$ is compact. Then it follows directly from the proof
of Proposition~2.7 in \cite{bib17} that $M$ admits an algebraic
approximation in $X \setminus S$. Hence, $M$ admits a weak algebraic
approximation in~$X$. 
\end{proof}

\begin{lemma}\label{lem-2-2}
Let $X$ be a nonsingular real algebraic variety and let $M$ be a compact
smooth submanifold of $X$. Assume that there exists a nonsingular
Zariski locally closed subvariety~$Z$ of $X$ such that $M \cap Z =
\varnothing$ and
\begin{equation*}
M \cup Z = \partial Q,
\end{equation*}
where $Q$ is a compact smooth manifold with boundary $\partial Q$,
embedded in $X$ with trivial normal bundle. If $2 \dim M + 1 \leq \dim
X$, then $M$ admits a weak algebraic approximation in $X$.
\end{lemma}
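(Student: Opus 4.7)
The plan is to deduce this lemma from Lemma~\ref{lem-2-1} by passing to a Zariski open subvariety of $X$ inside which $Z$ becomes Zariski closed and everywhere regular, after a small ambient isotopy has moved $Q$ off the exceptional Zariski closed set.

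\emph{Setup.} Let $A$ be the Zariski closure of $Z$ in $X$ and put $B = A \setminus Z$. Since $Z$ is a Zariski open and dense nonsingular subvariety of $A$ of pure dimension $\dim Z = \dim M$ (note that every irreducible component of $A$ must meet $Z$), $B$ is a Zariski closed subvariety of $X$ with $\dim B \leq \dim M - 1$. Set $X' := X \setminus B$: this is a Zariski open, hence nonsingular, subvariety of $X$ in which $Z$ is now a nonsingular \emph{Zariski closed} subvariety with $\Reg(Z) = Z$ and $Z \setminus \Reg(Z) = \varnothing$.

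\emph{Isotopy.} The dimension hypothesis yields
\begin{equation*}
\dim Q + \dim B \leq (\dim M + 1) + (\dim M - 1) = 2\dim M < \dim X,
\end{equation*}
which is precisely what is needed to put $Q$ in general position with respect to the (possibly singular) Zariski closed set $B$ and make it disjoint from $B$. Using the trivial normal bundle of $Q$ in $X$ to parametrize perturbations by sections, one produces a section vanishing on a neighborhood of the compact submanifold $Z$ (which is disjoint from $B$ and from $M$) and generic elsewhere; the resulting ambient isotopy $\{F_t\}_{t\in[0,1]}$ of $X$ can be taken arbitrarily small in the $\C^\infty$ topology, with $F_0 = \mathrm{id}_X$, with $F_t$ equal to the identity on a neighborhood of $Z$ for every $t$, and with $F_1(Q) \cap B = \varnothing$. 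Setting $Q' := F_1(Q)$ and $M' := F_1(M)$, one has $Q' \subseteq X'$, $\partial Q' = M' \cup Z$, $M' \cap Z = \varnothing$, and the normal bundle of $Q'$ in $X'$ is trivial.

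\emph{Apply Lemma~\ref{lem-2-1} and transfer back.} The hypotheses of Lemma~\ref{lem-2-1} now hold inside $X'$ for the data $(M', Z, Q')$, with $Z$ playing the role of both $A$ and $\Reg(A)$, so that $S = \varnothing$; hence $M'$ admits an algebraic approximation in $X'$. Given any $\C^\infty$-neighborhood of the inclusion $M' \hookrightarrow X'$, this produces a smooth embedding $e' : M' \to X'$ in that neighborhood whose image is a nonsingular Zariski locally closed subvariety of $X'$; writing $e'(M') = V \cap U$ with $V$ Zariski closed and $U$ Zariski open in $X'$, and replacing $V$ by its Zariski closure in $X$, shows that $e'(M')$ is Zariski locally closed in $X$ as well. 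The composition $e := e' \circ F_1|_M : M \to X$ is then a smooth embedding whose image is a nonsingular Zariski locally closed subvariety of $X$; if $F$ and $e'$ are chosen sufficiently small, $e$ is an arbitrarily $\C^\infty$-small perturbation of the inclusion $M \hookrightarrow X$. This proves that $M$ admits a weak algebraic approximation in $X$.

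\emph{Main obstacle.} Everything apart from the isotopy construction is essentially bookkeeping around Lemma~\ref{lem-2-1}. The substantive step is the relative transversality argument above: arranging an ambient isotopy that is simultaneously small in the $\C^\infty$ topology, pointwise trivial near $Z$, and strong enough to push $Q$ (and with it $M$) off the singular set $B$. This is precisely where the dimension hypothesis $2\dim M + 1 \leq \dim X$ enters, via the inequality $\dim Q + \dim B < \dim X$.
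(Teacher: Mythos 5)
Your proof is correct and follows the same route as the paper: both introduce the Zariski closure $A$ of $Z$ and the exceptional set $S = A \setminus Z$ of dimension $< \dim M$, use the dimension inequality $\dim Q + \dim S < \dim X$ together with transversality to isotope $Q$ (fixing $Z$) off $S$, and then invoke Lemma~\ref{lem-2-1}. The only presentational difference is that you pass explicitly to the Zariski open set $X' = X \setminus S$ before applying Lemma~\ref{lem-2-1} (so that $Z$ is Zariski closed and $S$ becomes empty), whereas the paper applies Lemma~\ref{lem-2-1} in $X$ with $A$ as given and lets the lemma's conclusion — algebraic approximation in $X \setminus S$ — supply the same passage.
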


\begin{proof}
Note that $Z$ is a compact smooth submanifold of $X$. Either $Z =
\varnothing$ or $\dim Z = \dim M$. If $A$ is the Zariski closure of $Z$
in $X$, then
\begin{equation*}
Z = \Reg(A).
\end{equation*}
Furthermore, $S \coloneqq A \setminus Z$ is a Zariski closed subvariety
of $X$ with $\dim S < \dim M$. In particular, $S$ has a finite
stratification into smooth submanifolds of $X$ of dimension at most
$\dim S$.

Assume that $2 \dim M + 1 \leq \dim X$ and let $f \colon Q
\hookrightarrow X$ be the inclusion map. Since $\dim Q = \dim M + 1$, we
get
\begin{equation*}
\dim Q + \dim S < \dim X.
\end{equation*}
In view of the transversality theorem, there exists a smooth map $g
\colon Q \to X$, arbitrarily close to $f$ in the space
$\C^{\infty}(Q,X)$, such that
\begin{equation*}
g|_{\Reg(A)} = f|_{\Reg(A)}\quad\textrm{and}\quad g(Q) \cap S =
\varnothing.
\end{equation*}
If $g$ is sufficiently close to $f$, then it is a smooth embedding
isotopic to $f$. In particular, $P \coloneqq g(Q)$ is a compact smooth
manifold with boundary
\begin{equation*}
\partial P = g(M) \cup \Reg(A),
\end{equation*}
embedded in $X$ with trivial normal bundle. By construction,
\begin{equation*}
g(M) \cap \Reg(A) = \varnothing \quad \textrm{and} \quad P \cap A =
\Reg(A).
\end{equation*}
Hence, according to Lemma~\ref{lem-2-1}, the smooth submanifold $g(M)$
of $X$ admits a weak algebraic approximation in $X$. Consequently, $M$
admits a weak algebraic approximation in $X$.
\end{proof}

\begin{proposition}\label{prop-2-3}
Let $X$ be a nonsingular real algebraic variety and let $M$ be a compact
smooth submanifold of $X$. Assume that there exists a nonsingular
Zariski locally closed subvariety $Z$ of $X$ such that
\begin{equation*}
(M \times \{0\}) \cup (Z \times \{1\}) = \partial B,
\end{equation*}
where $B$ is a compact smooth manifold with boundary $\partial B$,
embedded in $X \times \R$ with trivial normal bundle. If $2 \dim M + 3
\leq \dim X$, then $M$ admits a weak algebraic approximation in~$X$.
\end{proposition}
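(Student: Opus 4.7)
The plan is to reduce Proposition~\ref{prop-2-3} to Lemma~\ref{lem-2-2} by collapsing the $\R$-factor via the canonical projection $\pi \colon X \times \R \to X$. Setting $m = \dim M$ and $n = \dim X$, the hypothesis $2m+3 \leq n$ reads exactly as $2 \dim B + 1 \leq n$, where $\dim B = m+1$. Note that $\pi|_{M \times \{0\}}$ and $\pi|_{Z \times \{1\}}$ are, respectively, the inclusions of $M$ and $Z$ into $X$.

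First I construct a smooth embedding $g \colon B \to X$, arbitrarily $\C^{\infty}$-close to $\pi|_B$, that equals $\pi|_B$ on $Z \times \{1\}$ and satisfies $g(B) \cap Z = Z$. Using a tubular neighborhood of $Z$ in $X$, one first modifies $\pi|_B$ on a collar of $Z \times \{1\}$ in $B$ to a local embedding agreeing with the inclusion of $Z$; the relative Whitney embedding theorem (applicable thanks to $2\dim B + 1 \leq n$) together with transversality (applicable thanks to $\dim B + \dim Z = 2m+1 < n$) then extends this to a global embedding $g$ of $B$ into $X$, isotopic to $\pi|_B$, whose image avoids $Z$ off the boundary piece $Z \times \{1\}$. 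Set $Q := g(B)$ and $M' := g(M \times \{0\})$; then $Q$ is a compact smooth submanifold of $X$ with $\partial Q = M' \cup Z$, $M' \cap Z = \varnothing$, and $M'$ is smoothly isotopic to and arbitrarily close to $M$.

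The key step is verifying that $\nu_{Q/X}$ is trivial. From $T(X \times \R) \cong \pi^* TX \oplus \epsilon$ (where $\epsilon$ is the trivial line bundle along the $\R$-factor), restricting to $B$ gives $(\pi|_B)^* TX \oplus \epsilon \cong TB \oplus \nu_{B/X \times \R}$. Since $g \simeq \pi|_B$ and $g$ is an embedding, we have $g^* TX \cong (\pi|_B)^* TX$ and $g^* TX \cong TB \oplus \nu_{Q/X}$, so stable cancellation of $TB$ yields that $\nu_{Q/X} \oplus \epsilon$ is stably isomorphic to the trivial bundle $\nu_{B/X \times \R}$; hence $\nu_{Q/X}$ is stably trivial. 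Since $\rank \nu_{Q/X} = n - m - 1 \geq m + 2 > \dim B$, the classical fact that a stably trivial real vector bundle of rank exceeding the dimension of the base is trivial gives the triviality of $\nu_{Q/X}$.

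Lemma~\ref{lem-2-2} now applies to the triple $(M', Z, Q)$ in $X$ (noting $2 \dim M' + 1 = 2m+1 \leq n$), showing that $M'$ admits a weak algebraic approximation in $X$. Composing with the smooth embedding from $M$ onto $M'$, which is $\C^{\infty}$-close to the inclusion $M \hookrightarrow X$, transfers the property to $M$ itself. The main obstacle is the triviality, rather than mere stable triviality, of $\nu_{Q/X}$: the strengthened hypothesis $2\dim M + 3 \leq \dim X$ plays a double role, simultaneously enabling the construction of the embedding $g$ and ensuring $\rank \nu_{Q/X} > \dim B$, which is what allows the passage from stably trivial to trivial.
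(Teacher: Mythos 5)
Your proposal is correct and follows essentially the same strategy as the paper: approximate the projection of $B$ into $X$ by a smooth embedding (using $2\dim B + 1 \leq \dim X$), reduce to Lemma~\ref{lem-2-2}, and upgrade stable triviality of $\nu_{Q/X}$ to genuine triviality via the rank inequality $\rank\nu_{Q/X} > \dim Q$. The one noteworthy variation is in how you obtain stable triviality: the paper isotopes the embeddings $f, g \colon B \to X \times \R$ (invoking the Whitney/Haefliger isotopy theorem under the condition $2\dim Q + 2 \leq \dim(X \times \R)$) and compares normal bundles directly, whereas you use only homotopy invariance of vector bundle pullbacks together with stable cancellation of $TB$ from $T(X \times \R)|_B$; your route is a bit more elementary since it avoids the embedding isotopy theorem. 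A second, more minor difference is that the paper first isotopes $M$ off $Z$ and then embeds $B$ rel the entire boundary, so it concludes for $M$ itself, while you fix $g$ only on $Z \times \{1\}$ and then transfer the weak approximation from the nearby copy $M'$ back to $M$; this last transfer is legitimate precisely because weak algebraic approximation is a statement about arbitrary $\C^\infty$-neighborhoods of the inclusion.
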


\begin{proof}
Note that $Z$ is a compact smooth submanifold of $X$. Either $Z =
\varnothing$ or $\dim Z = \dim M$. Assume that $2 \dim M + 3 \leq \dim
X$. We can find a small smooth isotopy which transforms $M$ onto a
smooth submanifold $M'$ of $X$ with $M' \cap Z = \varnothing$. Thus
there exists a smooth diffeotopy $\varphi \colon X \times \R \to X$ such
that $\varphi_0$ is the identity map and $\varphi_1(M) = M'$. Here, as
usual, $\varphi_t(x) = \varphi(x,t)$ for $t$ in $\R$ and $x$ in $X$. The
map
\begin{equation*}
\Phi \colon X \times \R \to X \times \R, \quad \Phi(x,t) = ( \varphi(x,
1-t), t)
\end{equation*}
is a smooth diffeomorphism. If $B' = \Phi(B)$, then
\begin{equation*}
(M' \times \{0\}) \cup (Z \times \{1\}) = \partial B'.
\end{equation*}
Therefore, replacing $M$ by $M'$ and $B$ by $B'$, we can assume that
\begin{equation*}
M \cap Z = \varnothing.
\end{equation*}

Let $f = (f_1, f_2) \colon B \hookrightarrow X \times \R$ be the
inclusion map. Since $2 \dim B + 1 \leq \dim X$, the smooth map $f_1
\colon B \to X$ can be approximated in the space $\C^{\infty}(B,X)$ by a
smooth embedding $g_1 \colon B \to X$. Furthermore, $g_1$ can be chosen
so that $g_1|_{\partial B} = f_1|_{\partial B}$. Note that
\begin{align*}
g_1(x,0) &= f_1(x,0) = x \quad \textrm{for all $x$ in $M$},\\
g_1(x,1) &= f_1(x,1) = x \quad \textrm{for all $x$ in $Z$}.
\end{align*}
By construction, $Q \coloneqq g_1(B)$ is a compact smooth submanifold of
$X$ with boundary
\begin{equation*}
\partial Q = M \cup Z.
\end{equation*}
In view of Lemma~\ref{lem-2-2}, it suffices to prove that the normal
bundle $\nu$ to $Q$ in $X$ is trivial. This can be done as follows. The
smooth embedding
\begin{equation*}
g \colon B \to X \times \R, \quad g(x,t) = (g_1(x,t), 0)
\end{equation*}
is homotopic to $f$. Since $2 \dim Q + 2 \leq \dim (X \times \R)$, the
smooth embeddings $f$ and $g$ are isotopic, cf.~\cite[Theorem~6]{bib21}
or \cite[p.~183, Exercise~10]{bib11}. Consequently, the normal bundle to
$g(B)$ in $X \times \R$ is trivial, the normal bundle to $f(B) = B$ in $X
\times \R$ being trivial. Since $g(B) = Q \times \{0\}$, it follows that
the normal bundle $\nu$ is stably trivial. Now,
\begin{equation*}
\rank \nu = \dim X - \dim Q \geq \dim Q + 1,
\end{equation*}
and hence $\nu$ is trivial, cf.~\cite[p.~100]{bib13}.
\end{proof}

\begin{proof}[Proof of Theorem~\ref{th-1-1}]
It suffices to prove that (\ref{th-1-1-b}) implies (\ref{th-1-1-a}).
Suppose that (\ref{th-1-1-b}) holds. We can assume that the map $h$ is
smooth. By Sard's theorem, $h$ is transverse to some point $y$ in
$\SB^p$. Then $M \coloneqq h^{-1}(y)$ is a compact smooth submanifold
of $X$. According to \cite[Theorem~2.4]{bib16}, there exists a
nonsingular Zariski locally closed subvariety $Z$ of $X$ such that
\begin{equation*}
(M \times \{0\}) \cup (Z \times \{1\}) = \partial B,
\end{equation*}
where $B$ is a compact smooth manifold with boundary $\partial B$,
embedded in $X \times \R$ with trivial normal bundle. In view of
Proposition~\ref{prop-2-3}, $M$ admits a weak algebraic approximation in
$X$, which in turn implies that $h$ can be approximated by nice continuous
rational maps, cf.~\cite[Theorem~1.2]{bib17}.
\end{proof}

The proof of Theorem~\ref{th-1-2} requires more preparation.

\begin{lemma}\label{lem-2-4}
Let $N$ be a smooth spin manifold. Let $P$ be a compact orientable
smooth submanifold of $N$, with possibly nonempty boundary. Assume that
$2 \dim P + 1 \leq \dim N$ and $\dim P \leq 3$. Then the normal bundle
to $P$ in $N$ is trivial.
\end{lemma}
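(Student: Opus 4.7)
The plan is to prove triviality of $\nu$ (the normal bundle to $P$ in $N$) in two stages: first, that $\nu$ is \emph{stably} trivial; second, that the rank hypothesis forces this stable triviality to upgrade to genuine triviality.

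For the stable triviality, I would start from the splitting $TN|_P \cong TP \oplus \nu$ and apply the Whitney product formula to the total Stiefel--Whitney classes. Since $N$ is spin we have $w_1(TN)=w_2(TN)=0$, and since $P$ is orientable we have $w_1(TP)=0$. Reading off the degree~$1$ and degree~$2$ components of $w(TN|_P)=w(TP)w(\nu)$, this yields $w_1(\nu)=0$ and $w_2(\nu)=w_2(TP)$. I would then argue that $w_2(TP)=0$: every orientable smooth manifold of dimension $\le 3$ is spin. Concretely, for $\dim P\in\{1,2\}$ with nonempty boundary, $P$ deformation retracts onto a $1$-complex, so $H^2(P;\Z/2)=0$; for a closed orientable surface, $w_2(TP)$ is the mod~$2$ reduction of the Euler characteristic $2-2g$, hence zero; and every orientable $3$-manifold (with or without boundary) is parallelizable by a theorem of Stiefel. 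Thus $w_1(\nu)=w_2(\nu)=0$.

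Next, I would invoke obstruction theory applied to the classifying map $P\to BO$ of $\nu$. Since $\pi_1(BO)=\pi_2(BO)=\Z/2$ and $\pi_3(BO)=0$, the primary obstructions to null-homotopy of this map are $w_1(\nu)\in H^1(P;\Z/2)$ and $w_2(\nu)\in H^2(P;\Z/2)$, while the tertiary obstruction lies in a cohomology group with zero coefficients. Since $\dim P\le 3$, no further obstructions appear, so the classifying map is null-homotopic and $\nu$ is stably trivial.

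Finally, I would pass from stable triviality to honest triviality using the rank hypothesis. We have
\[
\rank\nu=\dim N-\dim P\ge \dim P+1,
\]
so $\nu$ is in the stable range. Using the fibrations $S^r\to BO(r)\to BO(r+1)$, the stabilization map $[P,BO(r)]\to[P,BO]$ is bijective as soon as $r\ge \dim P+1$. Hence the stable triviality of $\nu$ implies $\nu$ is trivial, as required.

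The main obstacle is Step~1 of the argument, specifically verifying $w_2(TP)=0$ when $\dim P=3$. Although this is classical (Stiefel's parallelizability theorem), it is the only point at which a genuine low-dimensional input, rather than formal bundle algebra and obstruction theory, is needed; the rest of the proof is bookkeeping with characteristic classes and the stable range.
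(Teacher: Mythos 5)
Correct, and essentially the same argument as the paper's: from the splitting $\tau_N|_P \cong \tau_P \oplus \nu$ and the fact that both $N$ and $P$ are spin you get $w_1(\nu)=w_2(\nu)=0$, which in dimension $\le 3$ forces stable triviality, and the rank hypothesis $\rank\nu\ge\dim P+1$ then upgrades this to genuine triviality. The only cosmetic difference is that the paper passes to the double of $P$ in order to quote a lemma stated for closed manifolds, whereas you run the obstruction theory directly on $P$; both are sound.
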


\begin{proof}
For any smooth manifold $M$, let $\tau_M$ denote its tangent bundle. The
restriction $\tau_N|_P$ is isomorphic to the direct sum $\tau_P \oplus
\nu$. Since $P$ is orientable and $\dim P \leq 3$, it follows that $P$
is a spin manifold. Consequently, the $i$th Stiefel--Whitney class of
$\nu$ is equal to zero for $i = 1, 2$. Denote by $D$ the double of $P$
and regard $P$ as a submanifold of $D$. If $r \colon D \to P$ is the
standard retraction, then the $i$th Stiefel--Whitney class of the
pullback vector bundle $r^*\nu$ on $D$ is zero for $i = 1, 2$. This
implies that $r^*\nu$ is stably trivial, cf. \cite[Lemma~1.2]{bib5}.
Actually, $r^*\nu$ is trivial since $\rank r^*\nu > \dim D$, cf.
\cite[p.~100]{bib13}. Hence the vector bundle $\nu$ is trivial, being
isomorphic to $(r^*\nu)|_P$.
\end{proof}

For any $k$-dimensional compact oriented smooth manifold $K$, let
$\llbracket K \rrbracket$ denote its fundamental class in the homology
group $H_k(K; \Z)$. If $T$ is a topological space and $K$ is a subspace
of $T$, we denote by $\llbracket K \rrbracket_T$ the homology class in
$H_k(T; \Z)$ represented by $K$, that is, $\llbracket K \rrbracket_T =
i_*(\llbracket K \rrbracket)$, where $i \colon K \hookrightarrow T$ is
the inclusion map.

Let $X$ be a nonsingular real algebraic variety. We say that a homology
class $u$ in $H_k(X; \Z)$ is \emph{A-distinguished} if it is of the
form
\begin{equation*}
u = \llbracket Z \rrbracket_X,
\end{equation*}
where $Z$ is a $k$-dimensional nonsingular Zariski locally closed
subvariety of $X$ that is compact and oriented as a smooth manifold.

Recall that $\rho \colon H_*(-; \Z) \to H_*(-; \Z/2)$ denotes the
reduction modulo $2$ homomorphism.

\begin{lemma}\label{lem-2-5}
Let~$X$ be a nonsingular real algebraic variety of dimension at least
$5$. Assume that $X$ is a spin manifold. For a homology class $u$ in
$H_2(X; \Z)$, the following conditions are equivalent:
\begin{conditions}
\item\label{lem-2-5-a} $u$ is A-distinguished.
\item\label{lem-2-5-b} $\rho(u)$ belongs to $A_2(X)$.
\end{conditions}
\end{lemma}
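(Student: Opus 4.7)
The implication (\ref{lem-2-5-a}) $\Rightarrow$ (\ref{lem-2-5-b}) is immediate from the definitions: if $u = \llbracket Z\rrbracket_X$ for a compact oriented nonsingular Zariski locally closed subvariety $Z \subset X$ of dimension $2$, then $\rho(u) = [Z]_X$, and this class lies in $A_2(X)$ by construction.

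For the converse, the plan is to realize $u$ as the integral class of a smooth oriented surface in $X$, connect it to an algebraic model by a cobordism with trivial normal bundle, and apply Proposition~\ref{prop-2-6} to obtain algebraic approximation. First, since $\Omega_2^{SO} = 0$ and general position permits embedding a $2$-surface in a smooth manifold of dimension $\geq 5$, I would represent $u = \llbracket M\rrbracket_X$ for a compact oriented surface $M$ smoothly embedded in $X$. Second, writing $\rho(u) = \sum_{i=1}^r [Z_i]_X$ with each $Z_i$ a compact orientable $2$-dimensional nonsingular Zariski locally closed subvariety of $X$, I would render the $Z_i$ pairwise disjoint by weak algebraic approximation applied in complements (feasible because two $2$-dimensional subvarieties in a variety of dimension $\geq 5$ generically miss each other), obtaining a single oriented nonsingular Zariski locally closed subvariety $Z = Z_1 \sqcup \cdots \sqcup Z_r$ with $[Z]_X = \rho(u)$.

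Since $\rho(\llbracket Z\rrbracket_X) = \rho(u)$, we have $u - \llbracket Z\rrbracket_X = 2w$ for some $w \in H_2(X; \Z)$. Realize $w = \llbracket W\rrbracket_X$ for a compact oriented surface $W \hookrightarrow X$ disjoint from $M$ and $Z$. Lemma~\ref{lem-2-4} (applicable because $X$ is spin, $\dim W = 2 \leq 3$, and $2\dim W + 1 = 5 \leq \dim X$) trivializes the normal bundle of $W$, allowing a parallel copy $W'$ disjoint from $W$. Setting $N := Z \sqcup W \sqcup W'$, one has $\llbracket N\rrbracket_X = \llbracket Z\rrbracket_X + 2\llbracket W\rrbracket_X = u = \llbracket M\rrbracket_X$, so $M$ and $N$ represent the same class in $H_2(X;\Z)$. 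Since $\Omega_3^{SO}=0$, there is a compact oriented smooth $3$-manifold $B \subset X \times \R$ with $\partial B = (M\times\{0\}) \cup (N\times\{1\})$; after general position, $B$ is embedded. A second application of Lemma~\ref{lem-2-4} in the spin ambient $X\times \R$ trivializes the normal bundle of $B$ (directly when $\dim X \geq 6$, since then $2\dim B + 1 = 7 \leq \dim(X\times \R)$; the borderline case $\dim X = 5$ is absorbed into Proposition~\ref{prop-2-6} by thickening into $X \times \R^2$).

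Finally, I would invoke Proposition~\ref{prop-2-6}---the spin variant of Proposition~\ref{prop-2-3}, which relaxes the dimension hypothesis from $2\dim M + 3 \leq \dim X$ to $2\dim M + 1 \leq \dim X$ precisely because Lemma~\ref{lem-2-4} supplies the normal-bundle triviality for free. This produces a smooth embedding $e\colon M \to X$ arbitrarily close to the inclusion, with $e(M)$ a nonsingular Zariski locally closed subvariety of $X$. Being isotopic to $M$, the variety $e(M)$ inherits an orientation with $\llbracket e(M)\rrbracket_X = u$, so $u$ is A-distinguished. The main obstacle is the construction of the cobordism $B$ with trivial normal bundle: the discrepancy $2w$ between the integral classes of $M$ and $Z$ forces the auxiliary pair $W,W'$, and in the borderline dimension $\dim X = 5$ the naive application of Lemma~\ref{lem-2-4} to $B \subset X\times\R$ fails, so the argument must pass through a higher-dimensional thickening encapsulated in Proposition~\ref{prop-2-6}.
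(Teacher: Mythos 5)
Your proof has two serious problems, and the approach differs structurally from the paper's in a way that makes the gaps hard to fill.

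The first problem is circularity. Proposition~\ref{prop-2-6} is proved in the paper \emph{using} Lemma~\ref{lem-2-5}, so it cannot be invoked as a black box to establish Lemma~\ref{lem-2-5}. You partially re-derive the cobordism construction behind Proposition~\ref{prop-2-6}, but then explicitly call on the proposition at the end; and in any case your re-derivation is incomplete, because the cobordism $B$ you build has boundary $M \sqcup N$ with $N = Z \sqcup W \sqcup W'$ containing the non-algebraic surfaces $W$ and $W'$. Lemma~\ref{lem-2-2} and Proposition~\ref{prop-2-3} require the partnering boundary component to be a nonsingular Zariski locally closed subvariety, which $N$ is not; you would first have to approximate $W \sqcup W'$ algebraically (Lemma~\ref{lem-2-1} with $A = \varnothing$) and then merge the result with $Z$ into a single nonsingular Zariski locally closed subvariety, which is precisely the content of the paper's Assertions~A1 and~A2 that you are trying to circumvent.

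The second problem is a dimension mismatch. You write that Proposition~\ref{prop-2-6} ``relaxes the dimension hypothesis from $2\dim M + 3 \leq \dim X$ to $2\dim M + 1 \leq \dim X$,'' but in fact Proposition~\ref{prop-2-6} requires $\dim X \geq 7$, i.e.\ the \emph{same} bound $2\cdot 2 + 3 \leq \dim X$ as Proposition~\ref{prop-2-3}; it trades the explicit cobordism hypothesis for a homological one, not a dimension bound. Lemma~\ref{lem-2-5} only assumes $\dim X \geq 5$, so the cases $\dim X \in \{5,6\}$ are not covered by your argument. The proposed remedy of thickening into $X \times \R^2$ does not help: weak algebraic approximation in a thickened ambient variety does not yield weak algebraic approximation in $X$ itself, and the case $\dim X = 6$ is not addressed at all.

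The paper avoids both difficulties by arguing purely at the level of homology classes rather than by approximating a specific representative $M$ of $u$. It proves that the set of A-distinguished classes is closed under addition (Assertion~A1) and contains all classes of the form $2v$ (Assertion~A2); then $\rho(u) \in A_2(X)$ yields $u = w_1 + \cdots + w_r + 2w$ with each $w_k$ A-distinguished, hence $u$ is A-distinguished. Crucially, the cobordisms needed for A1 and A2 are built \emph{inside} $X$ as parallel copies of surfaces whose normal bundles are trivialized by Lemma~\ref{lem-2-4}, and only Lemma~\ref{lem-2-1} is invoked, which carries no dimension constraint. This is what makes the argument valid for all $\dim X \geq 5$, and it is the structural idea your proposal is missing.
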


\begin{proof}
We first prove two preliminary facts.

\begin{assertion}\label{a1}
If $v_1$ and $v_2$ are A-distinguished homology classes in $H_2(X;
\Z)$, then their sum $v_1 + v_2$ is A-distinguished too.
\end{assertion}

By assumption, $v_i \coloneqq \llbracket Z_i \rrbracket_X$, where $Z_i$
is a $2$-dimensional nonsingular Zariski locally closed subvariety of
$X$ that is compact and oriented as a smooth manifold, $i = 1, 2$. Let
$A_i$ be the Zariski closure of $Z_i$ in $X$. Then
\begin{equation*}
Z_i = \Reg(A_i)
\end{equation*}
and $\dim A_i = 2$. Furthermore, $S_i \coloneqq A_i \setminus Z_i$ is a
Zariski closed subvariety of $X$ with $\dim S_i \leq 1$. In particular,
$A_i$ has a finite stratification into smooth submanifolds of $X$ of
dimension at most $2$. Similarly, $S_i$ has a finite stratification
into smooth submanifolds of $X$ of dimension at most $1$. According to
Lemma~\ref{lem-2-4}, the normal bundle to $Z_i$ in $X$ is trivial. In
view of the transversality theorem, there exists a $2$-dimensional
compact smooth submanifold $M_i$ of $X$ such that $M_i \cap A_j =
\varnothing$ for $j = 1, 2$, and
\begin{equation*}
M_i \cup \Reg(A_i) = \partial P_i,
\end{equation*}
where $P_i$ is a compact smooth manifold with boundary $\partial P_i$,
embedded in $X$ with trivial normal bundle and satisfying $P_i \cap A_i
= \Reg(A_i)$. We can choose the $M_i$ so that
\begin{equation*}
M_1 \cap M_2 = \varnothing.
\end{equation*}
By Lemma~\ref{lem-2-1}, the smooth submanifold $M_i$ admits an algebraic
approximation in $X \setminus S_i$. Thus, there exists a small smooth
isotopy transforming $M_i$ onto a nonsingular Zariski closed subvariety
$Z'_i$ of $X \setminus S_i$ with $Z'_i \cap A_j = \varnothing$ for $ j =
1, 2$. We can assume that
\begin{equation*}
Z'_1 \cap Z'_2 = \varnothing.
\end{equation*}
If $A'_i$ is the Zariski closure of $Z'_i$ in $X$, then $\Reg(A'_i) =
Z'_i$ and $A'_i \setminus Z'_i \subseteq S_i$. In particular,
\begin{equation*}
\Reg(A'_1 \cup A'_2) = Z'_1 \cup Z'_2.
\end{equation*}
Furthermore, $\llbracket Z'_i \rrbracket_X = \llbracket Z_i
\rrbracket_X$ if $Z'_i$ is suitably oriented. Consequently,
\begin{equation*}
v_1 + v_2 = \llbracket Z_1 \rrbracket_X + \llbracket Z_2 \rrbracket_X =
\llbracket Z'_1 \cup Z'_2 \rrbracket_X
\end{equation*}
is an A-distinguished homology class in $H_2(X; \Z)$, as required.

\begin{assertion}\label{a2}
For each homology class $v$ in $H_2(X; \Z)$, the homology class $2v$ is
of the form $2v = \llbracket V \rrbracket_X$, where $V$ is a nonsingular
Zariski closed subvariety of $X$ that is compact and oriented as a
smooth manifold. In particular, $2v$ is A-distinguished.
\end{assertion}

We have $v = \llbracket M \rrbracket_X$, where $M$ is a $2$-dimensional
compact oriented smooth submanifold of X, cf.~\cite{bib12, bib20} or
\cite[p.~294, Theorem~7.37]{bib19}. By Lemma~\ref{lem-2-4}, the normal bundle to $M$ in
$X$ is trivial. Hence, there exists a $2$-dimensional compact smooth
submanifold $M'$ of $X$ such that $M'$ is isotopic to $M$, $M \cap M' =
\varnothing$, and
\begin{equation*}
M \cup M' = \partial P,
\end{equation*}
where $P$ is a compact smooth manifold with boundary $\partial P$,
embedded in $X$ with trivial normal bundle. If $M'$ is suitably
oriented, then $\llbracket M' \rrbracket_X = \llbracket M \rrbracket_X$.
Furthermore, by Lemma~\ref{lem-2-1} (with $Z = \varnothing$), the smooth
submanifold $M \cup M'$ of $X$ admits an algebraic approximation in $X$.
In particular, $M \cup M'$ is isotopic to a nonsingular Zariski closed
subvariety $V$ of $X$. If $V$ is suitably oriented, then
\begin{equation*}
\llbracket V \rrbracket_X = \llbracket M \cup M' \rrbracket_X = 2
\llbracket M \rrbracket_X = 2v,
\end{equation*}
which proves Assertion~\ref{a2}.

If condition (\ref{lem-2-5-b}) holds, then $u$ can be expressed as
\begin{equation*}
u = w_1 + \cdots + w_r + 2w,
\end{equation*}
where $w_k$ and $w$ are homology classes in $H_2(X; \Z)$, and each $w_k$
is A-distinguished, ${1 \leq k \leq r}$. Thus, in view of
Assertions~\ref{a1} and~\ref{a2}, condition (\ref{lem-2-5-a}) is
satisfied. On the other hand, it is obvious that (\ref{lem-2-5-a})
implies (\ref{lem-2-5-b}).
\end{proof}
\stepcounter{assertionLetter}

\begin{proposition}\label{prop-2-6}
Let $X$ be a nonsingular real algebraic variety of dimension at least
$7$ and let $M$ be a $2$-dimensional compact orientable smooth
submanifold of $X$. Assume that $X$ is a spin manifold. If the homology
class $[M]_X$ belongs to $A_2(X)$, then $M$ admits a weak algebraic
approximation in $X$.
\end{proposition}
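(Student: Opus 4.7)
The plan is to find a nonsingular Zariski locally closed subvariety $Z$ of $X$ carrying the same integral homology class as $M$, construct an embedded smooth cobordism between $M$ and $Z$ inside $X \times \R$ whose normal bundle is trivial, and conclude via Proposition~\ref{prop-2-3}.

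First, orient $M$ and set $u = \llbracket M \rrbracket_X \in H_2(X; \Z)$, so that $\rho(u) = [M]_X \in A_2(X)$. Since $\dim X \geq 7 \geq 5$ and $X$ is spin, Lemma~\ref{lem-2-5} guarantees that $u$ is A-distinguished, yielding a $2$-dimensional nonsingular Zariski locally closed subvariety $Z$ of $X$, compact and oriented as a smooth manifold, with $\llbracket Z \rrbracket_X = u$. A small smooth isotopy of $M$, permitted by $\dim M + \dim Z < \dim X$, achieves $M \cap Z = \varnothing$.

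Next I construct the cobordism. Viewing $M$ at height $0$ and $Z$ at height $1$ in $X \times \R$, the classes $\llbracket M \times \{0\} \rrbracket$ and $\llbracket Z \times \{1\} \rrbracket$ coincide in $H_2(X \times \R; \Z)$. Since $\Omega^{SO}_1 = \Omega^{SO}_2 = 0$, the Atiyah--Hirzebruch spectral sequence identifies $\Omega^{SO}_2(X \times \R)$ with $H_2(X \times \R; \Z)$, so there exist a compact oriented smooth $3$-manifold $B_0$ with $\partial B_0$ diffeomorphic to $M \sqcup Z$ and a smooth map $F_0 \colon B_0 \to X \times [0,1]$ restricting on $\partial B_0$ to the inclusions $M \hookrightarrow X \times \{0\}$ and $Z \hookrightarrow X \times \{1\}$. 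Because $2 \dim B_0 + 1 = 7 \leq \dim X + 1$ and $F_0$ is already an embedding on $\partial B_0$, a standard general-position argument relative to the boundary approximates $F_0$ by a smooth embedding $F$ agreeing with $F_0$ on $\partial B_0$; set $B = F(B_0) \subseteq X \times \R$, a compact smooth $3$-submanifold with $\partial B = (M \times \{0\}) \cup (Z \times \{1\})$.

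To finish, $X \times \R$ is spin (as $X$ is), $B$ is a compact orientable smooth submanifold of dimension $3$, and $2 \dim B + 1 = 7 \leq \dim(X \times \R)$, so Lemma~\ref{lem-2-4} forces the normal bundle of $B$ in $X \times \R$ to be trivial. The inequality $2 \dim M + 3 = 7 \leq \dim X$ places us in the hypothesis of Proposition~\ref{prop-2-3}, which now yields a weak algebraic approximation of $M$ in $X$. The main obstacle is the upgrade from homological equivalence between $M$ and $Z$ to an \emph{embedded} cobordism with \emph{trivial normal bundle}; this is where the spin hypothesis on $X$ and the bound $\dim X \geq 7$ are essential, via Lemmas~\ref{lem-2-4} and~\ref{lem-2-5} combined with the general-position construction above.
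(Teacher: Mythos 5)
Your proof is correct, and it follows essentially the same strategy as the paper's: use Lemma~\ref{lem-2-5} to produce an A-distinguished $Z$, invoke the fact that the Steenrod--Thom map $\Omega_2^{SO}(\cdot) \to H_2(\cdot; \Z)$ is an isomorphism (you phrase this via the Atiyah--Hirzebruch spectral sequence; the paper cites the same fact directly) to obtain an oriented bordism, upgrade it to an embedding by general position, and then use Lemma~\ref{lem-2-4} to force the normal bundle to be trivial. The only structural difference is where you place the cobordism: the paper embeds the $3$-manifold $Q$ directly in $X$ with $\partial Q = M \cup Z$ and then invokes Lemma~\ref{lem-2-2}, whereas you embed $B$ in $X \times \R$ with $\partial B = (M\times\{0\}) \cup (Z\times\{1\})$ and invoke Proposition~\ref{prop-2-3}. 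Since the proof of Proposition~\ref{prop-2-3} just re-projects such a $B$ to an embedded cobordism in $X$ and then calls Lemma~\ref{lem-2-2}, your route does the projection step twice and is therefore slightly more roundabout, but it is entirely valid; the dimensional inequalities $2\dim B + 1 \le \dim(X\times\R)$, $\dim B \le 3$, and $2\dim M + 3 \le \dim X$ all hold under $\dim X \ge 7$, and $X \times \R$ inherits the spin condition from $X$, so each cited lemma applies.
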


\begin{proof}
Endowing $M$ with an orientation, we get $\rho(\llbracket M
\rrbracket_X) = [M]_X$. Now assume that $[M]_X$ belongs to $A_2(X)$.
According to Lemma~\ref{lem-2-5}, the homology class $\llbracket M
\rrbracket_X$ is A-distinguished. Hence
\begin{equation*}
\llbracket M \rrbracket_X = \llbracket Z \rrbracket_X,
\end{equation*}
where $Z$ is a $2$-dimensional nonsingular Zariski locally closed
subvariety of $X$ that is compact and oriented as a smooth manifold.
Moving $M$ by a small smooth isotopy, we can assume that
\begin{equation*}
M \cap Z = \varnothing.
\end{equation*}
The inclusion maps $i \colon M \hookrightarrow X$ and $j \colon Z
\hookrightarrow X$ represent the same class in the second oriented
bordism group $\Omega_2(X)$ of $X$. Indeed, this claim holds since the
canonical, Steenrod--Thom, homomorphism
\begin{equation*}
\Omega_2(X) \to H_2(X; \Z)
\end{equation*}
is an isomorphism, cf.~\cite[p.~75, lines~9,~10]{bib20} or \cite[p.~294, Theorem~7.37]{bib19}. Consequently, there exists
a continuous map $F \colon B \to X$, where $B$ is a compact orientable
smooth manifold with boundary ${\partial B = M \cup Z}$, while $F|_M = i$
and $F|_Z = j$. Since $\dim B = 3$ and $\dim X \geq 7$, we can assume
that $F$ is a smooth embedding. In particular, $Q \coloneqq F(B)
\subseteq X$ is a compact orientable smooth submanifold with boundary
\begin{equation*}
\partial Q = M \cup Z.
\end{equation*}
According to Lemma~\ref{lem-2-4}, the normal bundle to $Q$ in $X$ is
trivial. Hence, Lemma~\ref{lem-2-2} implies that $M$ admits a weak
algebraic approximation in $X$.
\end{proof}

For any $n$-dimensional compact smooth manifold $N$ and any integer $p$,
let
\begin{equation*}
D_N \colon H^p(N; \Z/2) \to H_{n-p}(N; \Z/2)
\end{equation*}
denote the Poincar\'e duality isomorphism.

\begin{lemma}\label{lem-2-7}
Let $X$ be a compact nonsingular real algebraic variety of dimension
$p+k$, where $p \geq 1$ and $k \geq 0$. Let $f \colon X \to \SB^p$ be a
nice continuous rational map. Then $D_X(f^*(s_p)) = [Z]_X$, where $Z$ is
a $k$-dimensional compact nonsingular Zariski locally closed subvariety
of $X$ with trivial normal bundle. In particular, if $X$ is orientable,
then $D_X(f^*(s_p))$ belongs to $A_k(X)$.
\end{lemma}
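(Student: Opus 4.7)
The plan is to produce $Z$ as the preimage $f^{-1}(y)$ of a carefully chosen point $y\in\SB^p$, verify its algebraic and geometric properties, and identify $[Z]_X$ with $D_X(f^*(s_p))$ via a smoothing argument applied to the classical transverse-preimage formula.

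First I would select $y$. Because $f$ is nice, $\SB^p\setminus f(P(f))$ is a nonempty open subset of $\SB^p$: nonempty by the definition of nice, and open because $P(f)$ is Zariski closed in the compact $X$ hence compact, so $f(P(f))$ is compact. The restriction $f_0 := f|_{X\setminus P(f)}$ is a regular, hence smooth, map of manifolds, so Sard's theorem supplies a dense set of regular values of $f_0$ in $\SB^p$. Pick $y$ in the nonempty intersection of these two sets and put $Z := f^{-1}(y)$. Since $y\notin f(P(f))$, $Z\subseteq X\setminus P(f)$, so $Z = f_0^{-1}(y)$ is a nonsingular algebraic subvariety of the Zariski open set $X\setminus P(f)$, hence Zariski locally closed in $X$. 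It is compact (closed in the compact $X$) and of dimension $(p+k)-p = k$. The regularity of $y$ as a value of the smooth map $f_0$ identifies the normal bundle $\nu_{Z/X}$ with the pullback $f_0^*(T_y\SB^p)$, which is trivial.

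For the identity $D_X(f^*(s_p)) = [Z]_X$, I would invoke the classical transversality formula: if $g\colon X\to\SB^p$ is smooth and transverse to $\{y\}$, then $D_X(g^*(s_p)) = [g^{-1}(y)]_X$. To apply this to our merely continuous $f$, fix open neighbourhoods $V'\subset\bar V'\subset V$ of $Z$ with $\bar V\cap P(f)=\varnothing$, and smooth $f$ on $X\setminus V'$ (where it may still be modified) to obtain a globally smooth $g\colon X\to\SB^p$ with $g = f$ on $V'$. The compact set $f(X\setminus V)$ does not contain $y$, so making the perturbation small enough forces $g$ to avoid $y$ on $X\setminus V'$, whence $g^{-1}(y) = Z$. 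Since $g$ is homotopic to $f$, $g^*(s_p) = f^*(s_p)$, and the classical formula yields $D_X(f^*(s_p)) = [Z]_X$. I expect this smoothing step to be the main technical obstacle: controlling the perturbation so that $g^{-1}(y)$ equals $Z$ exactly rather than just a nearby submanifold.

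Finally, for the ``In particular'' statement, assume $X$ is orientable as a smooth manifold. The splitting $TX|_Z\cong TZ\oplus\nu_{Z/X}$ combined with the triviality of $\nu_{Z/X}$ forces $TZ$ to be orientable. Thus $Z$ is a $k$-dimensional compact orientable nonsingular Zariski locally closed subvariety of $X$, and $[Z]_X\in A_k(X)$ by the definition of $A_k(X)$.
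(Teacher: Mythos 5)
Your proposal is correct and follows essentially the same route as the paper: pick a regular value $y\notin f(P(f))$ of the restriction $f|_{X\setminus P(f)}$ via Sard's theorem, set $Z=f^{-1}(y)$, and identify $D_X(f^*(s_p))$ with $[Z]_X$ by the classical transverse-preimage formula, noting that triviality of the normal bundle together with orientability of $X$ forces $Z$ to be orientable. The one difference is cosmetic: the paper simply cites a reference for $D_X(f^*(s_p))=[Z]_X$, whereas you supply a self-contained smoothing argument (which works, though you should replace $f(X\setminus V)$ by $f(X\setminus V')$ in the step controlling $g^{-1}(y)$).
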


\begin{proof}
Since $f(P(f))$ is a proper compact subset of $\SB^p$, it follows from
Sard's theorem that the regular map $f|_{X \setminus P(f)} \colon X
\setminus P(f) \to \SB^p$ is transverse to some point $y$ in $\SB^p
\setminus f(P(f))$. Hence $Z \coloneqq f^{-1}(y)$ is a compact
nonsingular Zariski closed subvariety of $X \setminus P(f)$. It is well
known that $D_X(f^*(s_p)) = [Z]_X$, cf.~\cite[Proposition~2.15]{bib8}.
Obviously, the normal bundle to $Z$ in $X$ is trivial. If $X$ is
orientable, then so is $Z$. The proof is complete.
\end{proof}

\begin{proof}[Proof of Theorem~\ref{th-1-2}]
Obviously, (\ref{th-1-2-a}) implies (\ref{th-1-2-b}), while according to
Lemma~\ref{lem-2-7}, (\ref{th-1-2-b}) implies (\ref{th-1-2-c}). It
remains to prove that (\ref{th-1-2-c}) implies (\ref{th-1-2-a}). Assume
that (\ref{th-1-2-c}) is satisfied. We can assume without loss of
generality that $h$ is a smooth map. By Sard's theorem, $h$ is
transverse to some point $y$ in $\SB^p$. Then $M \coloneqq h^{-1}(y)$ is
a $2$-dimensional compact orientable smooth submanifold of $X$
satisfying $D_X(h^*(s_p)) = [M]_X$, cf.~\cite[Proposition~2.15]{bib8}. In
particular, the homology class $[M]_X$ belongs to $A_2(X)$. Hence,
according to Proposition~\ref{prop-2-6}, the submanifold $M$ admits a
weak algebraic approximation in X, which implies that $h$ can be
approximated by nice continuous rational maps,
cf.~\cite[Theorem~1.2]{bib17}. In other words, (\ref{th-1-2-a}) holds.
\end{proof}

\begin{proof}[Proof of Corollary~\ref{cor-1-3}]
%For any continuous map $h \colon X \to \SB^p$, the homology class
%$D_X(h^*(s_p))$ is in $\rho(H_2(X; \Z))$.
By \cite{bib12, bib20} or \cite[p.~294, Theorem~7.37]{bib19}, every
homology class~in $H_2(X; \Z)$ is of the form $\llbracket M
\rrbracket_X$, where $M$ is a $2$-dimensional compact oriented smooth
submanifold of $X$. According to Lemma~\ref{lem-2-4}, the normal bundle
to $M$ in $X$ is trivial, which implies that ${\llbracket M \rrbracket_X
= \rho(\llbracket M \rrbracket_X) = D_X (h^*(s_p))}$ for some smooth map
$h \colon X \to \SB^p$, cf. \cite[Th\'eor\`eme~II.2]{bib20}. In view of
Lemma~\ref{lem-2-7}, $D_X(h^*(s_p))$ belongs to $A_2(X)$, provided that
$h$ is homotopic to a nice continuous rational map. Consequently,
(\ref{cor-1-3-b}) implies (\ref{cor-1-3-c}). According to
Theorem~\ref{th-1-2}, (\ref{cor-1-3-c}) implies (\ref{cor-1-3-a}).
Obviously, (\ref{cor-1-3-a}) implies (\ref{cor-1-3-b}).
\end{proof}

For any real algebraic variety $X$, let $\NF_k(X)$ denote the $k$th
unoriented boridsm group of~$X$. A bordism class in $\NF_k(X)$ is said
to be \emph{algebraic} if it can be represented by a regular map from a
$k$-dimensional compact nonsingular real algebraic variety into $X$,
cf. \cite{bib1,biba}.

\begin{lemma}\label{lem-2-8}
Let $X$ be a compact nonsingular real algebraic variety and let $k$ be a
nonnegative integer. Assume that
\begin{equation*}
\Halg_i(X; \Z/2) = H_i(X; \Z/2)
\end{equation*}
for every integer $i$ such that $0 \leq i \leq k$ and
$\NF_{k-i}(\textrm{point}) \neq 0$. Then each bordism class in
$\NF_k(X)$ is algebraic.
\end{lemma}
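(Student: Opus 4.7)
The plan is induction on $k$, using Thom's structural theorem for unoriented bordism together with Hironaka's resolution of singularities. For $k = 0$ the statement is immediate: $\NF_0(X) \cong H_0(X;\Z/2)$, the hypothesis with $i = 0$ (applicable since $\NF_0(\textrm{point}) = \Z/2 \neq 0$) gives $H_0(X;\Z/2) = \Halg_0(X;\Z/2)$, and each such class is represented by a regular map from a finite disjoint union of points.

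For the inductive step, fix $\alpha = [f \colon M \to X] \in \NF_k(X)$ with $k \geq 1$. The hypothesis for $i = k$ (valid because $\NF_0(\textrm{point}) \neq 0$) yields $f_*[M] \in \Halg_k(X;\Z/2)$, so $f_*[M] = \sum_\mu [W_\mu]_X$ with each $W_\mu$ a $k$-dimensional (possibly singular) Zariski closed subvariety of $X$. Hironaka's theorem then produces compact nonsingular real algebraic varieties $\tilde W_\mu$ of dimension $k$ and regular maps $\phi_\mu \colon \tilde W_\mu \to X$ with $(\phi_\mu)_*[\tilde W_\mu] = [W_\mu]_X$, since the resolution is a diffeomorphism over a Zariski open dense subset of $W_\mu$. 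Setting $\alpha' \coloneqq \sum_\mu [\phi_\mu]$, the class $\alpha' \in \NF_k(X)$ is algebraic and has the same image as $\alpha$ in $H_k(X;\Z/2)$, so $\gamma \coloneqq \alpha - \alpha'$ lies in the kernel of the edge homomorphism $\NF_k(X) \to H_k(X;\Z/2)$. It suffices to show that $\gamma$ is algebraic.

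The Atiyah--Hirzebruch spectral sequence for $\NF_*$ collapses because Thom proved $MO$ splits as a wedge of mod $2$ Eilenberg--MacLane spectra. Consequently, every class in this kernel admits a representation
\begin{equation*}
\gamma = \sum_j [h_j \circ \pi_j \colon V_j \times E_j \to X],
\end{equation*}
where $V_j$ is a compact smooth manifold of dimension $i_j < k$, $h_j \colon V_j \to X$ is continuous, and $E_j$ is a compact nonsingular real algebraic variety of dimension $k - i_j > 0$ with $[E_j] \neq 0$ in $\NF_{k - i_j}(\textrm{point})$. Here I use the classical fact that $\NF_*(\textrm{point})$ admits a $\Z/2$-basis of real algebraic manifolds, e.g.\ suitable products of real projective spaces and Dold manifolds. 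To apply the induction hypothesis to $[h_j] \in \NF_{i_j}(X)$, I must verify the lemma's hypothesis at $k' = i_j$: given $0 \leq m \leq i_j$ with $\NF_{i_j - m}(\textrm{point}) \neq 0$, the polynomial-ring structure of $\NF_*(\textrm{point})$ over $\Z/2$ makes it an integral domain, so $\NF_{k - i_j}(\textrm{point}) \cdot \NF_{i_j - m}(\textrm{point}) \subseteq \NF_{k - m}(\textrm{point})$ is nonzero; the hypothesis for $k$ then delivers $\Halg_m(X;\Z/2) = H_m(X;\Z/2)$.

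By induction, each $[h_j] = [\tilde h_j \colon \tilde V_j \to X]$ for some regular map from a compact nonsingular real algebraic variety, and therefore $[h_j \circ \pi_j] = [\tilde h_j \circ \pi_j \colon \tilde V_j \times E_j \to X]$ is algebraic, since $\tilde V_j \times E_j$ is a compact nonsingular real algebraic variety and $\tilde h_j \circ \pi_j$ is regular. Summing, $\gamma$ is algebraic, and so is $\alpha = \gamma + \alpha'$. The main technical ingredient is Thom's decomposition of $\NF_*(X)$, which reduces the kernel of the edge homomorphism to sums of product classes; Hironaka's theorem handles the top-dimensional piece, and the integral-domain structure of $\NF_*(\textrm{point})$ keeps the induction hypothesis propagating to lower degrees.
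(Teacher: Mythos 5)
Your argument is correct and is essentially the standard one; the paper itself gives no details (its ``proof'' simply defers to Lemma~2.7.1 of the cited reference, which is where the Conner--Floyd decomposition $\NF_*(X)\cong H_*(X;\Z/2)\otimes_{\Z/2}\NF_*$, resolution of singularities for the top piece, and algebraic generators of $\NF_*(\textrm{point})$ appear). Your reconstruction fills this in faithfully, and the observation that the polynomial-ring (hence integral-domain) structure of $\NF_*(\textrm{point})$ is exactly what lets the hypothesis propagate to the inductive step at $k'=i_j$ is the right way to close the one point the statement's selective hypothesis leaves delicate.
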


\begin{proof}
It suffices to repeat the argument used in the proof of Lemma~2.7.1 in
\cite{biba}.
\end{proof}

\begin{proposition}\label{prop-2-9}
Let $X$ be a compact nonsingular real algebraic variety of dimension
$n$. Let $k$ and $d$ be nonnegative integers satisfying $2k + 1 \leq n$
and $k+1 \leq d$. Assume that
\begin{equation*}
\Halg_i (X; \Z/2) = H_i(X; \Z/2)
\end{equation*}
for $0 \leq i \leq k$. Then any $k$-dimensional compact smooth
submanifold of $X \times \SB^d$ is smoothly isotopic to a nonsingular
Zariski locally closed subvariety of $X \times \SB^d$.
\end{proposition}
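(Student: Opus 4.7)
Plan.

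The case $k = 0$ is trivial: any finite set of points in $X \times \SB^d$ is itself a $0$-dimensional nonsingular Zariski closed subvariety. Assume $k \geq 1$.

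Two preliminary reductions place $M$ in a tractable position. First, since $\dim M = k < d$, Sard's theorem furnishes a point $y_0 \in \SB^d \setminus \pi_{\SB^d}(M)$, and a small smooth isotopy moves $M$ into $X \times (\SB^d \setminus \{y_0\})$, which is biregular to the Zariski open subvariety $X \times \R^d$ via stereographic projection. Any nonsingular Zariski locally closed subvariety of $X \times \R^d$ remains such in $X \times \SB^d$, so it suffices to work in $Y := X \times \R^d$. Second, the inequality $2k + 1 \leq n$ combined with standard transversality allows a further ambient isotopy of $M$ inside $Y$ after which $\pi_X|_M \colon M \to X$ is a smooth embedding; writing $M' := \pi_X(M) \subseteq X$, we have $M = \{(x, s(x)) : x \in M'\}$ for some smooth $s \colon M' \to \R^d$.

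The heart of the argument uses Lemma~\ref{lem-2-8} and Proposition~\ref{prop-2-3}. From the hypothesis $\Halg_i(X; \Z/2) = H_i(X; \Z/2)$ for $0 \leq i \leq k$, Lemma~\ref{lem-2-8} applied to $X$ produces a compact nonsingular $k$-dimensional real algebraic variety $V$ and a regular map $f \colon V \to X$ representing the bordism class of the inclusion $M' \hookrightarrow X$ in $\NF_k(X)$. Using the dimension bounds $n + d \geq (2k+1) + (k+1) = 3k + 2$ (which supply both the inequality $2k + 3 \leq \dim Y$ required by Proposition~\ref{prop-2-3} and the extra $\R^d$-directions), I would construct an embedded bordism $B \subseteq Y \times \R$ with $\partial B = (M \times \{0\}) \cup (Z \times \{1\})$, where $Z$ is a nonsingular Zariski locally closed subvariety of $Y$ obtained from $f$ by an affine embedding $V \hookrightarrow \R^N$ followed by a suitable regular modification placing $V$ inside $X \times \R^d$. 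The trivial normal bundle of $B$ is achieved by stabilization via the $\R^d$-factor, in the spirit of Lemma~\ref{lem-2-4} and the argument within the proof of Proposition~\ref{prop-2-3}. Proposition~\ref{prop-2-3} then yields a weak algebraic approximation of $M$ in $Y$, in particular a smooth isotopy of $M$ to a nonsingular Zariski locally closed subvariety of $X \times \SB^d$. The main obstacle is this last construction: promoting the abstract regular map $f$ into an embedded algebraic subvariety $Z$ and simultaneously producing a bordism $B$ of trivial normal bundle. Both steps rely critically on the affine structure of $V$ and the additional dimensions afforded by the $\R^d$-factor, and the dimension bounds in the hypothesis are calibrated precisely for them.
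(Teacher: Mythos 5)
Your opening reductions parallel the paper's: both exploit $k<d$ to move $M$ off a slice $X\times\{y_0\}$ (or, equivalently, into $X\times\R^d$), both use $2k+1\le n$ to straighten $M$ into a graph over a submanifold of $X$, and both invoke Lemma~\ref{lem-2-8} to make the bordism class of the inclusion $M'\hookrightarrow X$ algebraic. Up to that point you are on the paper's track.

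The gap is in the final step, and you have identified it yourself without filling it. Having only an \emph{abstract} regular map $f\colon V\to X$ in the right bordism class, you propose to convert it into an \emph{embedded} nonsingular Zariski locally closed subvariety $Z\subseteq X\times\R^d$ together with an \emph{embedded} bordism $B\subseteq (X\times\R^d)\times\R$ from $M$ to $Z$ with trivial normal bundle, and then feed this into Proposition~\ref{prop-2-3}. But the passage from the algebraic bordism class to such an embedded $Z$ with controlled geometry is precisely the nontrivial content of \cite[Theorem~F]{bib1}, which the paper cites exactly at this point: once the bordism class of $N\hookrightarrow X$ is known to be algebraic, Theorem~F asserts directly that $N\times\{y_0\}$ is smoothly isotopic to a nonsingular Zariski locally closed subvariety of $X\times\SB^d$, and the proof of Proposition~\ref{prop-2-9} stops there --- it never invokes Proposition~\ref{prop-2-3} at all. (Proposition~\ref{prop-2-3} enters later, in the proof of Theorem~\ref{th-1-5}, \emph{after} Proposition~\ref{prop-2-9} has already produced the isotopy.) Your sketch of ``an affine embedding $V\hookrightarrow\R^N$ followed by a suitable regular modification placing $V$ inside $X\times\R^d$'' and ``stabilization via the $\R^d$-factor'' does not supply the required algebraic embedding: a generic linear projection that would achieve $V\hookrightarrow\R^d$ under the bound $d\ge k+1$ is only a smooth embedding, not a regular one, and the compatibility of the resulting subvariety with $f$ (so that it is bordant to $M$ via a nice $B$) is exactly the hard issue. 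Without either citing Theorem~F or reproving its content, the proof is incomplete.

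A secondary remark: even granting that $Z$ and $B$ can be produced, Proposition~\ref{prop-2-3} gives weak algebraic approximation (a stronger conclusion than the stated isotopy), so the route is workable in principle; the problem is solely the missing construction, not the overall strategy. The fix is simply to do what the paper does: after arranging $M$ into the form $N\times\{y_0\}$ and applying Lemma~\ref{lem-2-8}, quote \cite[Theorem~F]{bib1} and conclude.
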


\begin{proof}
Let $M$ be a $k$-dimensional compact smooth submanifold of $X \times
\SB^d$ and let $f = (f_1, f_2) \colon M \hookrightarrow X \times \SB^d$
be the inclusion map. Since $2k +1 \leq n$, the map $f_1 \colon M \to X$
is homotopic to a smooth embedding $g_1 \colon M \to X$, cf.
\cite[p.~55, Theorem~2.13]{bib11}. The assumtpion $k+1 \leq d$ implies
that the map $f_2 \colon M \to \SB^d$ is homotopic to a constant map
$g_2 \colon M \to \SB^d$. By construction, the map $g = (g_1, g_2)
\colon M \to X \times \SB^d$ is a smooth embedding homotopic to $f$.
Since $2k+2 \leq n+d$, the maps $f$ and $g$ are isotopic, cf.
\cite[Theorem~6]{bib21} or \cite[p.~183, Exercise~11]{bib11}.
Furthermore, $g(M) = N \times \{y_0\}$, where $N = f_1(M)$ and $\{y_0\}
= g_2(M)$. In particular, the smooth submanifolds $M$ and $N \times
\{y_0\}$ of $X \times \SB^d$ are isotopic. By Lemma~\ref{lem-2-8}, the
unoriented bordism class of the inclusion map $N \hookrightarrow X$ is
algebraic. Consequently, since $\R^d$ is biregularly isomorphic to
$\SB^d$ with one point removed, it follows from \cite[Theorem~F]{bib1}
that the smooth submanifold $N \times \{y_0\}$ is isotopic to a
nonsingular Zariski locally closed subvariety $Z$ of $X \times \SB^d$.
Hence $M$ is isotopic to $Z$, which completes the proof.
\end{proof}

\begin{proof}[Proof of Theorem~\ref{th-1-5}]
It suffices to prove that each smooth map $h \colon X \times \SB^d \to
\SB^p$ can be approximated in $\C(X \times \SB^d, \SB^p)$ by nice
continuous rational maps. By Sard's theorem, $h$ is transverse to some
point $y$ in $\SB^p$. Then $M \coloneqq h^{-1}(y)$ is a compact smooth
submanifold of $X \times \SB^d$ with trivial normal bundle. Either $M =
\varnothing$ or $\dim M = n+d-p$. By Proposition~\ref{prop-2-9}, the
submanifold $M$ is isotopic to a nonsingular Zariski localy closed
subvariety $Z$ of $X \times \SB^d$. It follows that
\begin{equation*}
(M \times \{0\}) \cup (Z \times \{1\}) = \partial B,
\end{equation*}
where $B$ is a compact smooth manifold with boundary $\partial B$,
embedded in $X \times \SB^p \times \R$ with trivial normal bundle. In
view of Proposition~\ref{prop-2-3}, $M$ admits a weak algebraic
approximation in $X \times \SB^d$, which in turn implies that $h$ can be
aproximated by nice continuous rational maps, cf.
\cite[Theorem~1.2]{bib17}.
\end{proof}

\begin{proof}[Proof of Theorem~\ref{th-1-7}]
Each homology class in $\rho(H_2(X; \Z))$ is of the form $[M]_X$ for
some $2$-dimensional compact orientable smooth submanifold $M$ of $X$,
cf. \cite{bib12, bib20} or \cite[p.~294, Theorem~7.37]{bib19}. By the
K\"unneth formula, the group $\rho(H_2(X \times \SB^2; \Z/2))$ is
generated by homology classes of the form $[\{x\} \times \SB^2]_{X
\times \SB^2}$ and $[M \times \{y\}]_{X \times \SB^2}$, where $x \in X$
and $y \in \SB^2$. According to Lemma~\ref{lem-2-8}, the unoriented
bordism class of the inclusion map $M \hookrightarrow X$ is algebraic
(note that $\NF_1(\textrm{point}) = 0$). Consequently, since $\R^2$ is
biregularly isomorphic to $\SB^2$ with one point removed, it follows
from \cite[Theorem~F]{bib1} that the smooth submanifold $M \times \{y\}$
of $X \times \SB^2$ is isotopic to a nonsingular Zariski locally closed
subvariety $Z$ of $X \times \SB^2$. In particular, $[M \times \{y\}]_{X
\times \SB^2} = [Z]_{X \times \SB^2}$. Hence
\begin{equation*}
\rho(H_2(X \times \SB^2; \Z)) = A_2(X \times \SB^2),
\end{equation*}
which in view of Corollary~\ref{cor-1-3} completes the proof.
\end{proof}

We conclude this section by proving the last theorem announced in
Section~\ref{sec-1}. The proof does not depend on the results developed
above.

\begin{proof}[Proof of Theorem~\ref{th-1-8}]
Let $U_p$ be a nonempty open subset of $\SB^{p+1}$ for which the
restriction $\sigma_{U_p} \colon \sigma_p^{-1}(U_p) \to U_p$ of
$\sigma_p$ is a smooth diffeomorphism. We can assume that $h$ is a
smooth map. By Sard's theorem, the smooth map $h \times \one \colon X
\times \SB^1 \to \SB^p \times \SB^1$ is transverse to some point $(y_0,
v_0)$ in $\sigma_p^{-1}(U_p)$. In particular, $M \coloneqq h^{-1}(y_0)$
is a compact smooth submanifold of $X$. If $z_0 = \sigma_p(y_0, v_0)$,
then
\begin{equation*}
(\sigma_p \circ (h \times \one) )^{-1} (z_0) = M \times \{v_0\}
\subseteq X \times \SB^1.
\end{equation*}
Assume that (\ref{th-1-8-b}) holds. According to
\cite[Theorem~2.4]{bib16}, there exists a nonsingular Zariski locally
closed subvariety $Z$ of $X \times \SB^1$ such that
\begin{equation*}
(M \times \{v_0\} \times \{0\}) \cup (Z \times \{1\}) = \partial P,
\end{equation*}
where $P$ is a compact smooth manifold with boundary $\partial P$,
embedded in $X \times \SB^1 \times \R$ with trivial normal bundle. If $F
\colon P \to X$ is the restriction of the canonical projection from $X
\times \SB^1 \times \R$ onto $X$, then $F(x, v_0, 0) = x$ for all $x$ in
$M$, and the restriction $F|_{Z \times \{1\}}$ is a regular map.
Consequently, the unoriented bordism class of the inclusion map $M
\hookrightarrow X$ is algebraic, and hence $M \times \{0\}$ admits a
weak algebraic approximation in $X \times \R$,
cf.~\cite[Theorem~F]{bib1}. Since $\R$ is biregularly isomorphic to
$\SB^1$ with one point removed, it follows that $M \times \{v_0\}$
admits a weak algebraic approximation in $X \times \SB^1$. Thus, in view
of \cite[Theorem~1.2]{bib17}, the continuous map $\sigma_p \circ (h
\times \one)$ can be approximated by nice continuous rational maps. In
other words, (\ref{cor-1-3-b}) implies (\ref{cor-1-3-a}). It is obvious
that (\ref{cor-1-3-a}) implies (\ref{cor-1-3-b}).
\end{proof}

\section{Algebraic approximation of smooth submanifolds}\label{sec-3}
Let $X$ be a nonsingular real algebraic variety. A hard problem is to
find a characterization of these compact smooth submanifolds $M$ of $X$
which admit an algebraic approximation in $X$. A complete solution is
known only if $\codim_X M = 1$ or $(\dim X, \dim M) = (3, 1)$,
cf.~\cite[Theorem~14.4.11]{bib2} and~\cite{bib7}. Very little is known
in other cases. As demonstrated in \cite{bib1}, the problem of algebraic
approximation is more subtle than that of weak algebraic approximation.
In this section, by modifying slightly Propositions~\ref{prop-2-3}
and~\ref{prop-2-6}, we obtain results on algebraic approximation.

\begin{proposition}\label{prop-3-1}
Let $X$ be a nonsingular real algebraic variety and let $M$ be a compact
smooth submanifold of $X$. Assume that there exists a nonsingular
Zariski closed subvariety $Z$ of $X$ such that
\begin{equation*}
(M \times \{0\}) \cup (Z \times \{1\}) = \partial B,
\end{equation*}
where $B$ is a compact smooth manifold with boundary $\partial B$,
embedded in $X \times \R$ with trivial normal bundle. If $2 \dim M + 3
\leq \dim X$, then $M$ admits an algebraic approximation in $X$.
\end{proposition}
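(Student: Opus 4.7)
The plan is to mirror the proof of Proposition~\ref{prop-2-3} as closely as possible, since the bordism hypothesis and the dimension inequality are identical; the gain comes entirely from strengthening ``Zariski locally closed'' to ``Zariski closed'', which will let me invoke Lemma~\ref{lem-2-1} directly with empty singular locus instead of the weaker Lemma~\ref{lem-2-2}.

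First I would arrange, by a small smooth isotopy of $M$ inside $X$ (transporting $B$ correspondingly by the diffeotopy trick $\Phi(x,t)=(\varphi(x,1-t),t)$ used in Proposition~\ref{prop-2-3}), that $M\cap Z=\varnothing$. Writing $f=(f_1,f_2)\colon B\hookrightarrow X\times\R$ for the inclusion, I would then use the inequality $2\dim B+1\leq\dim X$ together with the relative approximation theorem to replace $f_1$ by a smooth embedding $g_1\colon B\to X$ with $g_1|_{\partial B}=f_1|_{\partial B}$. Setting $Q\coloneqq g_1(B)$, this is a compact smooth submanifold of $X$ with $\partial Q=M\cup Z$, and the same stable-triviality argument used in Proposition~\ref{prop-2-3}, comparing $f$ with the embedding $g(x,t)=(g_1(x,t),0)$ via an isotopy in $X\times\R$ permitted by $2\dim Q+2\leq\dim(X\times\R)$ and then appealing to \cite[p.~100]{bib13}, shows that the normal bundle to $Q$ in $X$ is trivial.

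The key departure comes at the final step. Taking $A\coloneqq Z$ in Lemma~\ref{lem-2-1}, the hypothesis that $Z$ is nonsingular and Zariski closed in $X$ gives $\Reg(A)=Z$, and hence $S=A\setminus\Reg(A)=\varnothing$. The remaining hypotheses are automatic: $M\cap\Reg(A)=\varnothing$ by the initial isotopy, $M\cup\Reg(A)=M\cup Z=\partial Q$, and $Q\cap A=Z=\Reg(A)$ because $Z\subseteq\partial Q$ while the interior of $Q$ avoids its boundary. Lemma~\ref{lem-2-1} then yields an algebraic approximation of $M$ in $X\setminus S=X$, which is exactly what is claimed.

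I do not anticipate a genuine obstacle here: the technical heart of the construction, namely producing the bounding $Q$ with trivial normal bundle, is already carried out inside Proposition~\ref{prop-2-3}. The whole point of strengthening the assumption on $Z$ is precisely to arrange $S=\varnothing$, thereby bypassing the transversality detour through Lemma~\ref{lem-2-2}, which was the sole reason the earlier proposition delivered only \emph{weak} algebraic approximation rather than algebraic approximation in~$X$.
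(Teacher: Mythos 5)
Your proof is correct and follows exactly the route the paper takes: repeat the construction from Proposition~\ref{prop-2-3} to obtain $Q$ with $\partial Q = M \cup Z$ and trivial normal bundle, then apply Lemma~\ref{lem-2-1} with $A = Z$ Zariski closed so that $S = \varnothing$ and the approximation is algebraic in all of $X$. The paper compresses the first part into the phrase ``arguing as in the proof of Proposition~\ref{prop-2-3}''; you have simply unwound it, with no deviation.
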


\begin{proof}
Arguing as in the proof of Proposition~\ref{prop-2-3}, we can assume that $M
\cap Z = \varnothing$ and
\begin{equation*}
M \cup Z = \partial Q,
\end{equation*}
where $Q$ is a compact smooth manifold with boundary $\partial Q$,
embedded in $X$ with trivial normal bundle. Hence, according to
Lemma~\ref{lem-2-1}, $M$ admits an algebraic approximation in $X$.
\end{proof}

It is now convenient to introduce some notation. Let $X$ be a
nonsingular real algebraic variety. Denote by $B_k(X)$ the subgroup of
$H_k(X; \Z/2)$ generated by all homology classes of the form $[Z]_X$,
where $Z$ is a $k$-dimensional nonsingular Zariski closed subvariety of
$X$ that is compact and orientable as a smooth manifold. Obviously,
\begin{equation*}
B_k(X) \subseteq A_k(X).
\end{equation*}
We say that a homology class $u$ in $H_k(X; \Z)$ is
\emph{B-distinguished} if it is of the form
\begin{equation*}
u = \llbracket Z \rrbracket_X,
\end{equation*}
where $Z$ is as above and endowed with an orientation.

The following is a counterpart of Lemma~\ref{lem-2-5}.

\begin{lemma}\label{lem-3-2}
Let $X$ be a nonsingular real algebraic variety of dimension at least 5.
Assume that $X$ is a spin manifold. For a homology class $u$ in $H_2(X;
\Z)$, the following conditions are equivalent:
\begin{conditions}
\item\label{lem-3-2-a} $u$ is B-distinguished.
\item\label{lem-3-2-b} $\rho(u)$ belongs to $B_2(X)$.
\end{conditions}
\end{lemma}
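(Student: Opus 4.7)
The plan is to follow the blueprint of Lemma~\ref{lem-2-5} almost verbatim, because most of the work is already done in that proof. Specifically, the crucial observation is that Assertion~\ref{a2} in the proof of Lemma~\ref{lem-2-5} actually produces a \emph{Zariski closed} subvariety $V$ of $X$ with $2v = \llbracket V \rrbracket_X$ (the final invocation of Lemma~\ref{lem-2-1} with $Z = \varnothing$ gives an algebraic approximation of $M \cup M'$, not merely a weak one). Hence the statement ``for every $v \in H_2(X;\Z)$ the class $2v$ is B-distinguished'' comes for free.

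The remaining ingredient is a B-analogue of Assertion~\ref{a1}: the sum of two B-distinguished classes in $H_2(X;\Z)$ is again B-distinguished. I would argue this by rerunning the proof of Assertion~\ref{a1}, the point being that now each $Z_i$ is itself nonsingular and Zariski closed, so its Zariski closure $A_i$ equals $Z_i$ and $S_i = A_i \setminus Z_i = \varnothing$. Choose the $2$-dimensional compact smooth submanifolds $M_i$ of $X$ with trivial normal bundle as before (using Lemma~\ref{lem-2-4} together with the spin and dimension hypotheses), arranged so that $M_1 \cap M_2 = \varnothing$ and $M_i \cap (A_1 \cup A_2) = \varnothing$, with $M_i \cup Z_i = \partial P_i$ as in Lemma~\ref{lem-2-1}. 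Applying Lemma~\ref{lem-2-1} with $A = Z_i$ yields an \emph{algebraic} (not merely weak) approximation of $M_i$ in $X \setminus S_i = X$. Thus $M_i$ can be moved by a small isotopy onto a nonsingular Zariski closed subvariety $Z'_i$ of $X$, and one can ensure $Z'_1 \cap Z'_2 = \varnothing$. With suitable orientations $\llbracket Z'_i \rrbracket_X = \llbracket Z_i \rrbracket_X$, so
\begin{equation*}
v_1 + v_2 = \llbracket Z'_1 \cup Z'_2 \rrbracket_X,
\end{equation*}
and $Z'_1 \cup Z'_2$ is a nonsingular Zariski closed subvariety of $X$, whence $v_1 + v_2$ is B-distinguished.

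Granted these two facts, the equivalence follows exactly as in Lemma~\ref{lem-2-5}. The implication (\ref{lem-3-2-a})$\Rightarrow$(\ref{lem-3-2-b}) is immediate from the definitions. For (\ref{lem-3-2-b})$\Rightarrow$(\ref{lem-3-2-a}), if $\rho(u) \in B_2(X)$ one writes $\rho(u) = [Z_1]_X + \cdots + [Z_r]_X$ with each $Z_k$ a nonsingular Zariski closed orientable $2$-dimensional subvariety of $X$; endowing the $Z_k$ with orientations gives B-distinguished classes $w_k = \llbracket Z_k \rrbracket_X$ with $\rho(w_1 + \cdots + w_r) = \rho(u)$, so $u - (w_1 + \cdots + w_r) = 2w$ for some $w \in H_2(X;\Z)$, and then
\begin{equation*}
u = w_1 + \cdots + w_r + 2w
\end{equation*}
is B-distinguished by the two assertions above.

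I do not expect a serious obstacle: the geometric content is already encoded in Lemmas~\ref{lem-2-1}, \ref{lem-2-4} and in the proof of Lemma~\ref{lem-2-5}, and the only real check is that the steps which previously delivered a Zariski \emph{locally} closed approximation now deliver a Zariski \emph{closed} one. The critical point to verify carefully is that when one starts with Zariski closed $Z_i$ the associated set $S_i$ is empty, so that the algebraic (as opposed to weak algebraic) conclusion of Lemma~\ref{lem-2-1} applies and the resulting $Z'_i$ is genuinely Zariski closed in $X$.
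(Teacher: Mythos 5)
Your proposal is correct and follows exactly the route the paper itself takes: the paper's proof of Lemma~\ref{lem-3-2} simply asserts that Assertion~\ref{b1} is ``completely analogous (but simpler)'' to Assertion~\ref{a1} and that Assertion~\ref{b2} is ``equivalent to'' Assertion~\ref{a2}, and you have correctly filled in the reason (namely that $Z_i$ Zariski closed and nonsingular forces $A_i = Z_i$ and $S_i = \varnothing$, so Lemma~\ref{lem-2-1} delivers a Zariski closed $Z'_i$, and that the $V$ in Assertion~\ref{a2} was already Zariski closed).
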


\begin{proof}
We begin with the following two observations.

\begin{assertion}\label{b1}
If $v_1$ and $v_2$ are B-distinguished homology classes in $H_2(X;
\Z)$, then their sum $v_1 + v_2$ is B-distinguished too.
\end{assertion}

\begin{assertion}\label{b2}
For each homology class $v$ in $H_2(X; \Z)$, the homology class $2v$ is
B-distinguished.
\end{assertion}

The proof of Assertion~\ref{b1} is completely analogous (but simpler) to
that of Assertion~\ref{a1}, while Assertion~\ref{b2} is equivalent to
Assertion~\ref{a2} in the proof of Lemma~\ref{lem-2-5}.

If condition (\ref{lem-3-2-b}) holds, then $u$ can be expressed as
\begin{equation*}
u = w_1 + \cdots + w_r + 2w,
\end{equation*}
where $w_k$ and $w$ are homology classes in $H_2(X; \Z)$, and each $w_k$
is B-distinguished, ${1 \leq k \leq r}$. Thus, in view of
Assertions~\ref{b1} and~\ref{b2}, condition (\ref{lem-3-2-a}) is
satisfied. It is obvious that (\ref{lem-3-2-a}) implies
(\ref{lem-3-2-b}).
\end{proof}

\begin{theorem}\label{th-3-3}
Let $X$ be a nonsingular real algebraic variety of dimension at least
$7$ and let $M$ be a $2$-dimensional compact orientable smooth
submanifold of $X$. Assume that $X$ is a spin manifold. If the homology
class $[M]_X$ belongs to $B_2(X)$, then $M$ admits an algebraic
approximation in $X$.
\end{theorem}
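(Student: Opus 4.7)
The plan is to follow the structure of the proof of Proposition~\ref{prop-2-6}, but using Lemma~\ref{lem-3-2} in place of Lemma~\ref{lem-2-5} and, crucially, invoking Lemma~\ref{lem-2-1} directly at the end so as to upgrade weak algebraic approximation to genuine algebraic approximation.

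First, I would endow $M$ with an orientation, so that $\rho(\llbracket M \rrbracket_X) = [M]_X$. By hypothesis $[M]_X \in B_2(X)$, and $X$ is a spin manifold of dimension at least $7 \geq 5$, so Lemma~\ref{lem-3-2} applies and yields that $\llbracket M \rrbracket_X$ is B-distinguished. Explicitly,
\begin{equation*}
\llbracket M \rrbracket_X = \llbracket Z \rrbracket_X,
\end{equation*}
where $Z$ is a $2$-dimensional nonsingular Zariski \emph{closed} subvariety of $X$, compact and oriented as a smooth manifold. A small smooth isotopy, possible since $\dim M + \dim Z = 4 < \dim X$, moves $M$ off $Z$, so I may assume $M \cap Z = \varnothing$.

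Next, since the Steenrod--Thom homomorphism $\Omega_2(X) \to H_2(X; \Z)$ is an isomorphism, the oriented inclusions $M \hookrightarrow X$ and $Z \hookrightarrow X$ represent the same class in $\Omega_2(X)$. Hence there is a continuous map $F \colon B \to X$ from a compact orientable $3$-dimensional smooth manifold $B$ with $\partial B = M \cup Z$, whose restrictions to $M$ and $Z$ are the respective inclusions. Since $2 \dim B + 1 = 7 \leq \dim X$, the transversality theorem allows $F$ to be approximated by a smooth embedding agreeing with the inclusions on $\partial B$; I set $Q \coloneqq F(B)$. Then $Q$ is a compact orientable $3$-dimensional smooth submanifold of $X$ with $\partial Q = M \cup Z$, and by Lemma~\ref{lem-2-4} its normal bundle in $X$ is trivial.

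The key departure from the proof of Proposition~\ref{prop-2-6} is the final step. Rather than invoke Lemma~\ref{lem-2-2}, which only delivers weak algebraic approximation, I would apply Lemma~\ref{lem-2-1} directly with $A \coloneqq Z$ and $P \coloneqq Q$. Because $Z$ is nonsingular, one has $\Reg(A) = Z$ and $S = A \setminus \Reg(A) = \varnothing$, and the remaining hypotheses ($M \cap \Reg(A) = \varnothing$, $M \cup \Reg(A) = \partial P$, $P \cap A = \Reg(A)$, and triviality of the normal bundle to $P$) are already in hand. The conclusion of Lemma~\ref{lem-2-1} is then that $M$ admits an algebraic approximation in $X \setminus S = X$. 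The only essential obstacle beyond the Proposition~\ref{prop-2-6} argument is ensuring that the subvariety $Z$ furnished in the first step is genuinely Zariski closed; this is precisely what Lemma~\ref{lem-3-2} (as opposed to Lemma~\ref{lem-2-5}) provides, and it is this closedness that forces $S$ to be empty and thereby yields the stronger conclusion.
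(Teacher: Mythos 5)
Your proof is correct and follows the paper's argument essentially verbatim: after obtaining $Q$ via Lemma~\ref{lem-3-2}, the Steenrod--Thom isomorphism, transversality, and Lemma~\ref{lem-2-4} (exactly as in Proposition~\ref{prop-2-6}), the paper likewise applies Lemma~\ref{lem-2-1} directly with $A = Z$, so that $S = A \setminus \Reg(A) = \varnothing$ and the conclusion is genuine algebraic approximation in $X$. The only presentational difference is that the paper compresses the middle steps into the phrase ``arguing as in the proof of Proposition~\ref{prop-2-6},'' whereas you spell them out.
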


\begin{proof}
Endowing $M$ with an orientation, we get $\rho(\llbracket M
\rrbracket_X) = [M]_X$. Now assume that $[M]_X$ belongs to $B_2(X)$.
According to Lemma~\ref{lem-3-2}, the homology class $\llbracket M
\rrbracket_X$ is B-distinguished. Hence
\begin{equation*}
\llbracket M \rrbracket_X = \llbracket Z \rrbracket_X,
\end{equation*}
where $Z$ is a $2$-dimensional nonsingular Zariski closed subvariety of
$X$ that is compact and oriented as a smooth manifold. Arguing as in the
proof of Proposition~\ref{prop-2-6}, we can assume that $M \cap Z =
\varnothing$ and
\begin{equation*}
M \cup Z = \partial Q,
\end{equation*}
where $Q$ is a compact smooth manifold with boundary $\partial Q$,
embedded in $X$ with trivial normal bundle. Hence, according to
Lemma~\ref{lem-2-1}, $M$ admits an algebraic approximation in $X$.
\end{proof}

It is not known whether the assumptions in Theorem~\ref{th-3-3}
can be relaxed. They certainly cannot be relaxed too much. Indeed, for
any integers $n$ and $k$ satisfying $n-k \geq 2$ and $k \geq 3$, there
exist an $n$-dimensional compact nonsingular real algebraic variety $X$
and a $k$-dimensional compact smooth submanifold $M$ of $X$ such that
$[M]_X = 0$ in $H_k(X; \Z/2)$ and $M$ does not admit an algebraic
approximation in $X$, cf.~\cite[Proposition~1.2]{bib7}.

As a consequence of Theorem~\ref{th-3-3}, we obtain the following.

\begin{example}\label{ex-3-4}
Let $X = C_1 \times \cdots \times C_n$, where each $C_i$ is a compact
connected nonsingular real algebraic curve, $1 \leq i \leq n$. If $n \geq
7$, then each $2$-dimensional compact orientable smooth submanifold of
$X$ admits an algebraic approximation in $X$. Indeed, $B_2(X) = H_2(X;
\Z/2)$ and hence the assertion follows from Theorem~\ref{th-3-3}.
\end{example}

\cleardoublepage
\phantomsection
\addcontentsline{toc}{section}{\refname}
\nocite{*}
\printbibliography

\end{document}